\theoremstyle{plain} 
\newtheorem{theorem}{\indent\sc Theorem}[section]
\newtheorem{lemma}[theorem]{\indent\sc Lemma}
\newtheorem{corollary}[theorem]{\indent\sc Corollary}
\theoremstyle{definition} 
\newtheorem{remark}[theorem]{\indent\sc Remark}
\begin{document}

\title[ Navier-Stokes-Cahn-Hilliard system]{On  the Cauchy problem of 3D incompressible Navier-Stokes-Cahn-Hilliard system  } 
\author[X. Zhao]{Xiaopeng Zhao } 


\subjclass[2010]{ 
 35K55, 76D03, 76D05.
}
%
\keywords{ 
  Navier-Stokes-Cahn-Hilliard system,  global well-posedness, decay, pure energy method.}

\address{
College of Sciences\endgraf
Northeastern University\endgraf
Shenyang 110819,~~~
P. R. China }
\email{zhaoxiaopeng@jiangnan.edu.cn}


\begin{abstract}
In this paper, we are concerned with the well-posedness and large time behavior of Cauchy problem for 3D incompressible Navier-Stokes-Cahn-Hilliard equations. First,  using Banach fixed point theorem, we establish the local well-posedness of solutions. Second, assuming $\|(u_0,\phi,\nabla\phi_0)\|_{\dot{H}^\frac12}$ is sufficiently small,  we obtain the  global well-posedness of solutions in the critical space. Moreover,  the optimal decay rates of the higher-order spatial derivatives of the solution are also obtained.
\end{abstract}\maketitle {\small
 \section{Introduction}

In Fluid Mechanics, the incompressible Navier-Stokes equations
 models the motions of single-phase fluids such as oil or water. Sometimes, we also need to understand the motion of binary fluid mixtures, that is fluids composed by either two phases of the same chemical species or phases of different composition. Diffuse interface methods  are widely used by many authors to describe the behavior of complex (e.g., binary) fluids \cite{DM,MD}. The Model H \cite{Gur,Ho,Morro} is a diffuse interface model for incompressible isothermal two-phase flows. This model include the incompressible Navier-Stokes equations for the (averged) velocity $u$ nonlinearly coupled with a convective Cahn-Hilliard equation for the (relative) concentration difference $\varphi$.

 Suppose that the temperature variations are negligible, taking the density is equal to $1$, and let the viscosity $\nu$  be constant, the Model H reduces to the incompressible Navier-Stokes-Cahn-Hilliard system \cite{2,Chella,JCP}
 \begin{equation} \label{1-0}
 \left\{ \begin{aligned}
         &\partial_tu-\nu\Delta u+u\cdot\nabla u+\nabla\pi=\mathcal{K}\mu\nabla\varphi, \\
                   &\hbox{div}u=0,\\
                   &\partial_t\varphi+u\cdot\nabla\varphi=\nabla\cdot(M\nabla\mu),\\
                   &\mu=-\varepsilon\Delta\varphi+\zeta F'(\varphi),
                           \end{aligned} \right.
                           \end{equation}
in $\Omega\times(0,\infty)$ with $\Omega\subseteq\mathbb{R}^3$ is a bounded domain or the whole space. In equations (\ref{1-0}),  $u$ denotes the mean velocity, $\pi$ represent the pressure and $\varphi$ is an order parameter related to the concentration of the two fluids (e.g. the concentration difference or the concentration of one component), respectively.      The quantities $\mu\geq0$, $M\geq0$ and $\mathcal{K}\geq0$  are positive constants that correspond to the kinematic viscosity of fluid, mobility constant and capillarity (stress)
coefficient, respectively. Moreover, $\mu$ denotes the chemical potential of the mixture which can be given by the variational derivative of the free energy functional
\begin{equation}\label{1-0x}
\mathcal{F}(\varphi)=\int_{\Omega}\left(\frac{\varepsilon}2|\nabla\varphi|^2+\zeta F(\varphi)\right)dx.
\end{equation}
Usually, we take   $F(\varphi)=\int_0^{\varphi}f(\xi)d\xi$ as   a double-well potential with two equal minima $\pm1$ corresponding to the two pure phases
\begin{equation}
\label{xy}
F(s)=\frac14(s^2-1)^2,
\end{equation}
or   a singular free energy
 \begin{equation}\label{xx}
F(s)=\frac{\theta}2[(1+s)\log(1+s)+(1-s)\log(1-s)]-\frac{\theta_c}2s^2,
\end{equation}
where $s\in[-1,1]$ and $0<\theta<\theta_c$. 
\begin{remark}In \cite{15G}, the authors pointed out that
$$
\mathcal{K}\mu\nabla\varphi=\mathcal{K}\nabla\left(\frac{\varepsilon}2|\nabla\varphi|^2+\xi F(\varphi)\right)
-\mathcal{K}\nabla\cdot(\nabla\varphi\otimes\nabla\varphi).
$$
Moreover,
the notation $\nabla \varphi\otimes\nabla \varphi$ denotes the $3\times3$ matrix whose $(i,j)$th entry is given by $\partial_i\varphi\cdot\partial_j\varphi$, $1\leq i,j\leq 3$, and hence
$$\nabla\cdot(\nabla \varphi\otimes\nabla \varphi)=\Delta\varphi\cdot\nabla\varphi+\frac12\sum_{k=1}^3\nabla|\nabla \varphi_k|^2.
$$
Therefore,  (\ref{1-0})$_1$ can be replaced by
$$
\partial_tu+u\cdot\nabla u-\nu\Delta u+\nabla\tilde{\pi}=-\mathcal{K}\Delta\varphi\cdot\nabla\varphi,
$$
with
$
\tilde{\pi}=\pi-\mathcal{K}\left(\frac{\varepsilon}2|\nabla\varphi|^2+\xi F(\varphi)\right)+ \frac{\mathcal{K}}2\sum_{k=1}^3\nabla|\nabla \varphi_k|^2.
$
\end{remark}
  In a sense, the incompressible Navier-Stokes-Cahn-Hilliard equation can be seen as  the coupling between the incompressible Navier-Stokes equations with the convective Cahn-Hilliard equations.
This coupling of equations   was highly studied from the theoretical and mathematical point of view. Till now, there is a large amount of literature on the mathematical analysis of initial-boundary value problem for Navier-Stokes-Cahn-Hilliard system in 2D or 3D case.   Abels et. al. \cite{ACMP,Ab,Abels}, Boyer \cite{7}, Eleuteri, Rocca and Schimperna \cite{Rocca}, Gal, Grasselli and Miranville \cite{Gal1}, Lam and Wu \cite{LaW}  studied the existence of global weak solution and unique strong solution for Navier-Stokes-Cahn-Hilliard  system with non-constant mobility; Gal and Grasselli \cite{15G} considered the asymptotic behavior of 2D Navier-Stokes-Navier-Stokes system, proved the existence of global attractor and exponential attractor; Colli, Frigeri and Grasselli \cite{PColli}, Frigeri, Grasselli and Krejci \cite{SFri}, Frigeri and Grasselli \cite{Frig} and Frigeri, Grasselli and Rocca \cite{Fri2} replaced the chemical potential (\ref{1-0})$_4$ by the following nonlocal model
$$
\mu=\int_{\Omega}J(x-y)dy\varphi-\int_{\Omega}J(x-y)\varphi(y)dy+\xi F'(\varphi),
$$
 obtained the nonlocal Navier-Stokes-Cahn-Hilliard equations with double-well potential or singular free energy, studied the well-posedness and long time behavior of solutions; Bosia and Gatti \cite{SB}, Cherfils and Madalina \cite{CPAA} and You, Li and Zhang \cite{YB} investigated the properties of solutions for Navier-Stokes-Cahn-Hilliard equations with dynamic boundary conditions, studied the well-posedness and long time behavior of solutions respectively; In \cite{ZF}, Zhou and Fan proved the vanishing viscosity limit of solutions for the initial boundary value problem of  2D  Navier-Stokes-Cahn-Hilliard equations; Giorgini,  Miranville and Temam \cite{AG} studied the Navier-Stokes-Cahn-Hilliard system with viscosity depending on concentration and logarithmic potential,  proved uniqueness of weak solutions in 2D, and existence and uniqueness of strong solutions (global in 2D and local in 3D) with divergence free initial velocity in $H_0^1$. Liu and Shen \cite{LS}, Key and Welfold \cite{Kay} and Feng \cite{Feng} studied the  numerical approximation of solutions,  etc.

There are only few results on the mathematical analysis of the Cauchy problem for incompressible Navie-Stokes-Cahn-Hilliard system. The first related result was obtained by Starovoitov \cite{STA}. The author supposed that $F$ is a suitably smooth double-well potential, studied the qualitative behavior  as $t\rightarrow\infty$ of the solutions for 2D Cauchy problem. 
The other paper on the Cauchy problem was written by Cao and Gal \cite{Cao}. In their paper, the authors established the global retularity and uniqueness of strong/classical solutions  for the Cauchy problem of 2D NS-CH equations with mixed partial viscosity and mobility. We remark that because of the difficulty caused by  the convective term (the order of $u\cdot\nabla\varphi$ in (\ref{1-0})$_3$ is low, one can't control it by a fourth order linear term directly), the coupling between $u$ and $\varphi$ (especially the term on the right hand side of (\ref{1-0})$_1$) and the chemical potential, there's no paper on the   Cauchy problem  of three-dimensional incompressible  Navier-Stokes-Cahn-Hilliard equations. Thus a natural question is how to study the properties of solutions for the Cauchy problem  of 3D Navier-Stokes-Cahn-Hilliard equations.

Since $\pm1$ can be seen as two   equal minima of the double well-potential. In this paper, let $\omega_0=\pm1$, we  consider the global existence and the time decay rate of solutions for the following Cauchy problem:
\begin{equation} \label{1-1c}
 \left\{ \begin{aligned}
         &\partial_tu- \Delta u+u\cdot\nabla u+\nabla\pi=-\Delta\phi\cdot\nabla\phi , \\
                   &\hbox{div}u=0,\\
                   &\partial_t\phi+u\cdot\nabla\phi+\Delta^2\phi=\Delta [(\phi+\omega_0)^3-\phi] ,
                   \\
                   &(u,\phi)|_{x=0}=(u_0(x),\phi_0(x)),
                           \end{aligned} \right.\quad (x,t)\in\mathbb{R}^3\times\mathbb{R}^+.
                           \end{equation}
                           where $\phi=\varphi-\omega_0$ and $\phi_0=\varphi_0-\omega_0$.
For convenience, we rewrite (\ref{1-1c}) as
 \begin{equation} \label{1-1}
 \left\{ \begin{aligned}
         &\partial_tu +u\cdot\nabla u+\nabla\pi-\Delta u=-\Delta\phi\cdot\nabla\phi , \\
                   &\hbox{div}u=0,\\
                   &\partial_t\phi+u\cdot\nabla\phi+\Delta^2\phi-  \Delta \phi=\Delta((\phi+\omega_0)^3-\phi)  , \\
                   &(u,\phi)|_{t=0}=(u_0(x), \phi _0(x)),
                           \end{aligned} \right.\quad\hbox{in}~ \mathbb{R}^3\times (0,\infty).
                           \end{equation}
\begin{remark}

We note that the  Laplacian operator $(-\Delta)^{\delta}$ ($\delta\in\mathbb{R}$) can be defined through the Fourier transform, namely
\begin{equation}\label{1-4c}
(-\Delta)^{\delta}f(x)=\Lambda^{2\delta}f(x)=\int_{\mathbb{R}^3}|x|^{2\delta}\hat{f}(\xi)e^{2\pi ix\cdot\xi}d\xi,
\end{equation}
where  $\widehat{f}$ is the Fourier transform of $f$. Moreover, we  use the notation $A\lesssim B$ to mean that $A\leq cB$ for a universal constant $c>0$ that only depends on the parameters coming from the problem and the indexes $N$ and $s$  coming from the regularity on the data. We also employ $C$ for positive constant depending additionally on the initial data.
\end{remark}
First of all, we consider the local well-posedness of solutions for problem (\ref{1-1}), i.e., we prove the following theorem:
\begin{theorem}[Local well-posedness]
\label{thm1.0}
Suppose that $(u_0,\phi_0)\in H^1(\mathbb{R}^3) $. Then, there exists a small time $\tilde{T}>0$ and a unique strong solution $(u,\phi)(x,t)$ to system (\ref{1-1}) satisfying
\begin{equation} \label{local-1}
\left\{ \begin{aligned}
        &u\in L^{\infty}([0,\tilde{T}];H^1)\bigcap L^2(0,\tilde{T};H^2), \\
               & \phi \in L^{\infty}(0,\tilde{T};H^1)\bigcap L^2(0,\tilde{T};H^3).
                          \end{aligned} \right.
                          \end{equation}
\end{theorem}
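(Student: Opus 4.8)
The plan is to prove local well-posedness by the Banach fixed point theorem applied to the integral (Duhamel) formulation of \eqref{1-1}, treating the linearized system — the heat equation for $u$ and the Cahn--Hilliard-type operator $\partial_t\phi+\Delta^2\phi-\Delta\phi$ for $\phi$ — as the principal part, and all the remaining terms (the transport terms $u\cdot\nabla u$, $u\cdot\nabla\phi$, the coupling $-\Delta\phi\cdot\nabla\phi$, the pressure-projected nonlinearity, and the cubic term $\Delta((\phi+\omega_0)^3-\phi)$) as a source. Concretely, I would let $\mathbb{P}$ denote the Leray projector, set $e^{t\Delta}$ for the Stokes semigroup and $e^{-t(\Delta^2-\Delta)}$ for the fourth-order semigroup, and define the solution map
\begin{equation}\label{duhamel-map}
\begin{aligned}
\mathcal{T}(u,\phi)(t) &= e^{t\Delta}u_0 - \int_0^t e^{(t-s)\Delta}\,\mathbb{P}\big(u\cdot\nabla u + \Delta\phi\cdot\nabla\phi\big)(s)\,ds,\\
\mathcal{S}(u,\phi)(t) &= e^{-t(\Delta^2-\Delta)}\phi_0 - \int_0^t e^{-(t-s)(\Delta^2-\Delta)}\big(u\cdot\nabla\phi - \Delta((\phi+\omega_0)^3-\phi)\big)(s)\,ds,
\end{aligned}
\end{equation}
on the complete metric space
\begin{equation}\label{space-X}
X_{\tilde T} = \Big\{(u,\phi): u\in L^\infty([0,\tilde T];H^1)\cap L^2(0,\tilde T;H^2),\ \phi\in L^\infty([0,\tilde T];H^1)\cap L^2(0,\tilde T;H^3)\Big\}
\end{equation}
equipped with the obvious norm, and I would show $\mathcal{T}\times\mathcal{S}$ maps a closed ball of $X_{\tilde T}$ into itself and is a contraction there for $\tilde T$ small.

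The key steps, in order: (i) record the smoothing estimates for the two semigroups — $\|e^{t\Delta}u_0\|_{L^\infty_t H^1}\lesssim\|u_0\|_{H^1}$, $\|e^{t\Delta}u_0\|_{L^2_tH^2}\lesssim\|u_0\|_{H^1}$, and the analogue $\|e^{-t(\Delta^2-\Delta)}\phi_0\|_{L^\infty_tH^1}+\|e^{-t(\Delta^2-\Delta)}\phi_0\|_{L^2_tH^3}\lesssim\|\phi_0\|_{H^1}$, together with the corresponding parabolic gain-of-two (resp. gain-of-four) derivatives applied to the Duhamel integrals; (ii) bound each nonlinear term in the norm dual to the linear gain — e.g. for the $u$-equation one needs $u\cdot\nabla u$ and $\Delta\phi\cdot\nabla\phi$ in $L^2_t L^2$ (or $L^1_tH^1$), which by Hölder, Gagliardo--Nirenberg and Sobolev embedding in $\mathbb{R}^3$ reduces to products of the $L^\infty_tH^1\cap L^2_tH^2$ and $L^\infty_tH^1\cap L^2_tH^3$ norms, and for the $\phi$-equation one needs $u\cdot\nabla\phi$ and $\Delta((\phi+\omega_0)^3-\phi)$ in $L^2_tH^{-1}$ (so that the fourth-order smoothing recovers $L^2_tH^3$), noting that $(\phi+\omega_0)^3-\phi=\phi^3+3\omega_0\phi^2+(3\omega_0^2-1)\phi$ so the worst piece is $\Delta(\phi^3)$; (iii) extract a positive power $\tilde T^\alpha$ in front of every nonlinear contribution, via Hölder in time against the time-integrable parts of the estimates, so that for $\tilde T$ small enough the ball is stable; (iv) repeat the same bilinear estimates on differences $(u_1-u_2,\phi_1-\phi_2)$ to obtain the contraction, again with a factor $\tilde T^\alpha$; (v) conclude existence and uniqueness of the fixed point, and verify a posteriori the regularity class \eqref{local-1} from the Duhamel representation of the fixed point.

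I expect the main obstacle to be step (ii) for the $\phi$-equation, specifically controlling the low-order convective term $u\cdot\nabla\phi$ and the cubic term $\Delta(\phi^3)$ using only the fourth-order smoothing — this is exactly the difficulty flagged in the introduction, that $u\cdot\nabla\phi$ is "of low order" and cannot be absorbed by the biharmonic term directly. The remedy is to place these terms in $L^2_t\dot H^{-1}$ rather than $L^2_tL^2$: for $\Delta(\phi^3)$ one writes it as $\mathrm{div}\,\nabla(\phi^3)$ and estimates $\nabla(\phi^3)=3\phi^2\nabla\phi$ in $L^2_tL^2$ by $\|\phi\|_{L^\infty_t H^1}^2\|\nabla\phi\|_{L^2_t L^\infty}$ with $\dot H^1\hookrightarrow L^6$ and interpolation $\|\nabla\phi\|_{L^\infty}\lesssim\|\phi\|_{H^2}^{1/2}\|\phi\|_{H^3}^{1/2}$ against the $L^2_tH^3$ norm; for $u\cdot\nabla\phi$ one uses $\mathrm{div}\,u=0$ to write it as $\mathrm{div}(u\phi)$ and estimates $u\phi$ in $L^2_t L^2$ by $\|u\|_{L^\infty_t L^6}\|\phi\|_{L^2_t L^3}$, both controlled by the $H^1$-level norms. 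A parallel care is needed for $\Delta\phi\cdot\nabla\phi$ in the $u$-equation, where the top-order factor $\Delta\phi$ forces one to spend the $L^2_tH^3$ regularity of $\phi$; here $\|\Delta\phi\cdot\nabla\phi\|_{L^2_tL^2}\lesssim\|\Delta\phi\|_{L^2_tL^6}\|\nabla\phi\|_{L^\infty_tL^3}\lesssim\|\phi\|_{L^2_tH^3}\|\phi\|_{L^\infty_tH^1}$, which is finite on $X_{\tilde T}$. Once these bilinear/trilinear estimates are in hand with the $\tilde T^\alpha$ gains, the fixed-point argument is routine.
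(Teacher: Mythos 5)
Your overall architecture (a fixed-point argument on a small time interval) is in the spirit of the paper, but your route is genuinely different: the paper freezes coefficients at a given $(\tilde u,\tilde\phi)$, solves the resulting \emph{linear} system, and closes the ball/contraction properties by $L^2$ energy estimates (testing against $u$, $\phi$, $\Delta u$, $\Delta\phi$), whereas you work with the mild (Duhamel) formulation and semigroup smoothing. The energy route has the advantage that each nonlinear term only ever needs to be paired with the solution in duality, so it tolerates very weak integrability of the products; the mild route demands that each forcing term actually lie in a space from which the semigroup can recover the full target regularity, and this is where your argument breaks.

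The concrete gap is the coupling term $\Delta\phi\cdot\nabla\phi$ in the $u$-equation. To obtain $u\in L^\infty_tH^1\cap L^2_tH^2$ from the heat Duhamel integral you need this term in $L^2_tL^2$ (or $L^1_tH^1$), and your estimate
$\|\Delta\phi\cdot\nabla\phi\|_{L^2_tL^2}\lesssim\|\Delta\phi\|_{L^2_tL^6}\|\nabla\phi\|_{L^\infty_tL^3}\lesssim\|\phi\|_{L^2_tH^3}\|\phi\|_{L^\infty_tH^1}$
uses the false embedding $\|\nabla\phi\|_{L^3(\mathbb{R}^3)}\lesssim\|\phi\|_{H^1}$: in three dimensions $\nabla\phi\in L^3$ requires $\phi\in \dot H^{3/2}$, which is not provided by $L^\infty_tH^1$. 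The failure is not a repairable choice of H\"older exponents: by Gagliardo--Nirenberg any bound of the form $\|\Delta\phi\cdot\nabla\phi\|_{L^2}\lesssim\|\phi\|_{H^1}^{a}\|\phi\|_{H^3}^{b}$ forces $b=5/4>1$, so on the space $L^\infty_tH^1\cap L^2_tH^3$ this product is only in $L^{8/5}_tL^2$, not $L^2_tL^2$, and $L^{8/5}_tL^2$ forcing does not return $u\in L^\infty_tH^1$ through the heat semigroup. The scheme can be salvaged, but only by changing the functional placement of this term --- for instance $\|\Delta\phi\cdot\nabla\phi\|_{H^1}\lesssim\|\phi\|_{H^1}^{1/4}\|\phi\|_{H^3}^{7/4}$ puts it in $L^{8/7}_tH^1$, and then the singular-kernel Young inequality $\|(t-s)^{-1/2}\|_{L^{8/5}(0,T)}\|f\|_{L^{8/7}_tH^1}$ recovers both $L^\infty_tH^1$ and $L^2_tH^2$ with a positive power of $T$ --- or by abandoning the mild formulation for the $u$-component in favour of the energy estimate, where $\int\Delta\phi\cdot\nabla\phi\cdot u\,dx\lesssim\|u\|_{L^6}\|\Delta\phi\|_{L^2}\|\nabla\phi\|_{L^3}$ spends one derivative on $u$ and is absorbed by the dissipation. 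As written, step (ii) for the $u$-equation does not close, and since you yourself identify this coupling as the main obstacle, the proof is incomplete at its critical point. The remaining estimates (the treatment of $u\cdot\nabla u$, of $u\cdot\nabla\phi=\mathrm{div}(u\phi)$ in $L^2_t\dot H^{-1}$, and of $\Delta(\phi^3)$ via $\nabla(\phi^3)\in L^2_tL^2$) are sound.
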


The second aim   of this paper is to consider the small data global well-posedness of solutions for system (\ref{1-1}).
\begin{theorem}
\label{thm1.1}
Let $N\geq1$, assume that $(u_0,  \phi_0)\in H^N(\mathbb{R}^3)\times H^{N+1}(\mathbb{R}^3)$, and there exists a constant $\delta_0>0$ such that if
\begin{equation}
\label{1-2}
\|u_0 \|_{\dot{H}^\frac12}+\| \phi_0 \|_{\dot{H}^\frac12}+\| \phi_0 \|_{\dot{H}^\frac32} \leq\delta_0 .
\end{equation}
  Then there exists a unique global solution $(u,\phi)$ satisfying that for all $T\geq0$,
\begin{equation} \label{local-1x}
\left\{ \begin{aligned}
        &u\in L^{\infty}( 0,T ;H^N),\quad\nabla u\in L^2(0,T;H^{N}), \\
               &  \phi \in L^{\infty}(0,T;H^{N+1}),\quad\nabla\phi,~\Delta\phi\in L^2(0,T;H^{N+1}).
                          \end{aligned} \right.
                          \end{equation}
\end{theorem}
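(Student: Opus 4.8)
\emph{Proof strategy.} The plan is to combine the local theory with a family of a~priori energy estimates and a continuation argument. First one extends Theorem~\ref{thm1.0} to data $(u_{0},\phi_{0})\in H^{N}(\mathbb{R}^{3})\times H^{N+1}(\mathbb{R}^{3})$: because the principal parts $-\Delta u$ and $\Delta^{2}\phi$ are parabolic and every nonlinearity is at most cubic and loses at most one derivative relative to the available smoothing, the Banach fixed-point scheme of Theorem~\ref{thm1.0} runs at this higher regularity and yields a unique solution on a maximal interval $[0,T^{*})$ with $(u,\phi)\in C\big([0,T^{*});H^{N}\times H^{N+1}\big)$. It then suffices to show that, under (\ref{1-2}), $\|u(t)\|_{H^{N}}^{2}+\|\phi(t)\|_{H^{N+1}}^{2}$ stays finite on every bounded subinterval of $[0,T^{*})$: this forces $T^{*}=+\infty$ and gives (\ref{local-1x}), while uniqueness follows from the same estimates applied to the difference of two solutions.

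\emph{Energy hierarchy.} Apply $\Lambda^{\ell}=(-\Delta)^{\ell/2}$ to $(\ref{1-1})_{1}$ for $0\le\ell\le N$ and to $(\ref{1-1})_{3}$ for $0\le\ell\le N+1$, and test in $L^{2}$ against $\Lambda^{\ell}u$ and $\Lambda^{\ell}\phi$ respectively. The pressure drops by $\mathrm{div}\,u=0$, which also kills $\langle u\cdot\nabla\Lambda^{\ell}u,\Lambda^{\ell}u\rangle$ and $\langle u\cdot\nabla\Lambda^{\ell}\phi,\Lambda^{\ell}\phi\rangle$; the Laplacian in $(\ref{1-1})_{1}$, the bi-Laplacian in $(\ref{1-1})_{3}$, and — crucially — the coercive second-order term $(3\omega_{0}^{2}-1)\Delta\phi=2\Delta\phi$ obtained by linearizing $\Delta[(\phi+\omega_{0})^{3}-\phi]$ about $\omega_{0}=\pm1$, produce the dissipation. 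Summing, with $\mathcal{E}:=\|u\|_{H^{N}}^{2}+\|\phi\|_{H^{N+1}}^{2}$ and $\mathcal{D}:=\|\nabla u\|_{H^{N}}^{2}+\|\nabla\phi\|_{H^{N+1}}^{2}+\|\Delta\phi\|_{H^{N+1}}^{2}$, one obtains $\frac{1}{2}\frac{d}{dt}\mathcal{E}+c\,\mathcal{D}\le\mathcal{N}$ with $\mathcal{N}$ a sum of nonlinear terms. Running the same computation at the fractional levels $\ell=\frac12$ (both equations) and $\ell=\frac32$ (the $\phi$-equation) yields $\frac{1}{2}\frac{d}{dt}\mathcal{E}_{c}+c\,\mathcal{D}_{c}\le\mathcal{N}_{c}$, where $\mathcal{E}_{c}=\|u\|_{\dot H^{1/2}}^{2}+\|\phi\|_{\dot H^{1/2}}^{2}+\|\phi\|_{\dot H^{3/2}}^{2}$ and $\mathcal{D}_{c}=\|u\|_{\dot H^{3/2}}^{2}+\|\phi\|_{\dot H^{3/2}}^{2}+\|\phi\|_{\dot H^{5/2}}^{2}+\|\phi\|_{\dot H^{7/2}}^{2}$.

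\emph{Propagating the critical smallness, and the low-order norms.} The key observation is that each term of $\mathcal{N}_{c}$ is bounded by $C\sqrt{\mathcal{E}_{c}}\,\mathcal{D}_{c}$: one integration by parts, Hölder's inequality, the embeddings $\dot H^{1/2}(\mathbb{R}^{3})\hookrightarrow L^{3}$ and $\dot H^{1}(\mathbb{R}^{3})\hookrightarrow L^{6}$, and interpolation give, for instance,
\[
\big|\langle u\cdot\nabla u,u\rangle_{\dot H^{1/2}}\big|
=\Big|\int_{\mathbb{R}^{3}}(u\cdot\nabla u)\,\Lambda u\,dx\Big|
\lesssim\|u\|_{L^{6}}\|\nabla u\|_{L^{3}}\|u\|_{\dot H^{1}}
\lesssim\|u\|_{\dot H^{1}}^{2}\|u\|_{\dot H^{3/2}}
\lesssim\|u\|_{\dot H^{1/2}}\|u\|_{\dot H^{3/2}}^{2},
\]
and the capillary term $\langle\Delta\phi\cdot\nabla\phi,u\rangle_{\dot H^{1/2}}$, the convective term $\langle u\cdot\nabla\phi,\phi\rangle_{\dot H^{1/2}}$ and the potential terms $\langle\Delta(\phi^{2}),\phi\rangle_{\dot H^{1/2}}$, $\langle\Delta(\phi^{3}),\phi\rangle_{\dot H^{1/2}}$ (together with their $\dot H^{3/2}$ counterparts) are handled the same way. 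Hence $\frac{d}{dt}\mathcal{E}_{c}+c\,\mathcal{D}_{c}\le C\sqrt{\mathcal{E}_{c}}\,\mathcal{D}_{c}$, so fixing $\delta_{0}$ with $C\delta_{0}\le c/2$, a continuity argument from (\ref{1-2}) gives $\mathcal{E}_{c}(t)\le\delta_{0}^{2}$ for all $t\in[0,T^{*})$ and $\int_{0}^{T^{*}}\mathcal{D}_{c}\,dt\lesssim\delta_{0}^{2}$. The $L^{2}$ estimates close by the same mechanism: testing $(\ref{1-1})_{3}$ with $\phi$, the cubic term contributes a nonpositive leading part while the remaining quadratic piece is absorbed via $\|\nabla\phi\|_{L^{3}}\lesssim\|\phi\|_{\dot H^{3/2}}\le\delta_{0}$, giving $\frac{d}{dt}\|\phi\|_{L^{2}}^{2}+c\big(\|\nabla\phi\|_{L^{2}}^{2}+\|\Delta\phi\|_{L^{2}}^{2}\big)\le0$; then $\frac{d}{dt}\|u\|_{L^{2}}^{2}+\|\nabla u\|_{L^{2}}^{2}\lesssim\|\phi\|_{\dot H^{3/2}}^{2}\|\Delta\phi\|_{L^{2}}^{2}$ has a time-integrable right-hand side, so $\|u\|_{L^{2}},\|\phi\|_{L^{2}}$ stay bounded and $\nabla u,\nabla\phi,\Delta\phi\in L^{2}_{t}L^{2}_{x}$.

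\emph{Higher-order norms, conclusion, and the main obstacle.} Every term of $\mathcal{N}$ is estimated with Kato--Ponce commutator estimates, Moser-type product inequalities, Agmon's inequality $\|f\|_{L^{\infty}}\lesssim\|\nabla f\|_{L^{2}}^{1/2}\|\Delta f\|_{L^{2}}^{1/2}$ in $\mathbb{R}^{3}$, and interpolation against the top dissipative level, so that it is either of the form $C\big(\|u\|_{\dot H^{1/2}}+\|\phi\|_{\dot H^{1/2}}+\|\phi\|_{\dot H^{3/2}}\big)\mathcal{D}\le C\delta_{0}\mathcal{D}$, absorbed into the left-hand side, or of the form $g(t)\,\mathcal{E}$ with $g$ built from the already-controlled lower-order dissipations, hence $g\in L^{1}_{\mathrm{loc}}[0,T^{*})$. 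Gronwall's inequality then bounds $\mathcal{E}(t)$ on every bounded subinterval, which completes the continuation and gives (\ref{local-1x}). The heart of the proof is precisely this nonlinear accounting. On the one hand the convective term $u\cdot\nabla\phi$ in $(\ref{1-1})_{3}$ is only first order while the principal part there is fourth order, so it cannot be controlled by $\|\Delta^{2}\phi\|_{L^{2}}$ alone — it is the auxiliary second-order dissipation produced by linearizing the double-well about $\omega_{0}=\pm1$ that saves it. On the other hand the capillary coupling $-\Delta\phi\cdot\nabla\phi$ on the right of $(\ref{1-1})_{1}$ has the same scaling as $u\cdot\nabla u$, which is why the smallness (\ref{1-2}) must involve $\|\phi_{0}\|_{\dot H^{1/2}}$ and $\|\phi_{0}\|_{\dot H^{3/2}}$ together. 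Guaranteeing that \emph{every} contribution to $\mathcal{N}$ — including for the small values of $N$, where no $L^{\infty}$ bound on $\nabla u$ or $\nabla\phi$ is available — exhibits such a clean gain is the main difficulty I would expect to face.
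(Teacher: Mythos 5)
Your proposal is correct and follows essentially the same route as the paper: a dissipative estimate at the critical levels $\dot H^{1/2}$ (for $u,\phi$) and $\dot H^{3/2}$ (for $\phi$) of the form $\frac{d}{dt}X+Y\le(X+X^2)Y$, whose smallness is propagated and then used to absorb every nonlinear contribution in the $\Lambda^{k}$/$\Lambda^{k+1}$ energy hierarchy into the dissipation, with the borrowed second-order term $\Delta\phi$ from the double-well potential supplying the coercivity that controls $u\cdot\nabla\phi$. The only difference is presentational: you make explicit the maximal-interval continuation argument and the $L^2$-level estimate, which the paper leaves implicit.
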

\begin{remark}
Note that system (\ref{1-1}) can be seen as the coupling between the Navier-Stokes equations and convective Cahn-Hilliard equation. Simple calculation shows that $\dot{H}^{\frac12}(\mathbb{R}^3)$ is a critical space for 3D Navier-Stokes equations. Moreover, if $v(x,t)$ is a smooth solution to 
\begin{equation}\label{0-6}
v_t-\Delta^2v+\nabla\cdot(|\nabla v|^2\nabla
v)=0,
\end{equation}in $\mathbb{R}^2$, then for each $\lambda>0$,
 $$
v_{\lambda}(x,t)=\lambda^{ -1} v(\lambda x,\lambda^4t)
 $$
 also solves (\ref{0-6}) unless we consider the initial condition. And the following scaling identity
 $$
 \|\nabla^k v_{\lambda}(\cdot,t)\|_{L^p(\mathbb{R}^3)}=\lambda^{1+k-1-\frac 3p}\|\nabla^k v(\cdot,\lambda^4t)\|_{L^p(\mathbb{R}^3)}.
 $$
 Hence, $p$ is the critical exponent if it satisfies
\begin{equation}\label{0-7}
 \frac 3p=1+k-1=k.
\end{equation}
Let $k=2$, by (\ref{0-7}), we find that $p=\frac 32$, this implies that $\dot{H}^{\frac32}(\mathbb{R}^3)=\dot{W}^{\frac32,2}(\mathbb{R}^3)$ is a critical space of equation (\ref{0-6}).  Hence, Theorem \ref{thm1.1} may be seen as a small initial data global well-possedness result in the critical Sobolev space.

\end{remark}

In the end, we also study the temporary decay rate of strong solutions for system (\ref{1-1}).
\begin{theorem}\label{thm1.2}
Suppose that all the assumptions of Theorem \ref{thm1.1} hold. If further, $(u_0,  \nabla\phi_0)\in\dot{H}^{-s}(\mathbb{R}^3)$ for some $s\in[0,\frac12]$, then for all $t\geq0$,
\begin{equation}
\label{1-4}
\|\Lambda^{-s}u(t)\|_{L^2}^2  +\|\Lambda^{-s}  \phi(t)\|_{L^2}^2  +\|\Lambda^{-s} \nabla\phi(t)\|_{L^2}^2\leq C,
\end{equation}
and
\begin{equation}\label{1-5}
\|\Lambda^ku(t)\|_{L^2}+ \|\Lambda^{k} \phi(t)\|_{L^2}+ \|\Lambda^{k+1} \phi(t)\|_{L^2}\leq C(1+t)^{-\frac{l+1}2},\quad\hbox{for}~k=0,1,\cdots, N-1 .
\end{equation}\end{theorem}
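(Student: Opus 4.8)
The plan is to run the pure energy method in the style of Guo and Wang: first propagate a negative Sobolev norm of the solution, then derive a whole family of higher-order energy inequalities, and finally interpolate between the two so as to turn the energy inequalities into decay ODEs. The real difficulty lies in the first step; once the negative norm is shown to stay bounded, the remaining steps are essentially the careful bookkeeping that already appears (at level zero) in the proof of Theorem~\ref{thm1.1}, so by that proof we may freely use the global-in-time bounds $u\in L^\infty(0,\infty;H^N)$, $\nabla u\in L^2(0,\infty;H^N)$, $\phi\in L^\infty(0,\infty;H^{N+1})$, $\nabla\phi,\Delta\phi\in L^2(0,\infty;H^{N+1})$ together with the smallness of the solution.

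\emph{Step 1: boundedness of the negative norm.} Put $U=(u,\phi,\nabla\phi)$. Apply $\Lambda^{-s}$ to $(\ref{1-1})_1$, to $(\ref{1-1})_3$, and to the gradient of $(\ref{1-1})_3$, test against $\Lambda^{-s}u$, $\Lambda^{-s}\phi$ and $\Lambda^{-s}\nabla\phi$ respectively, and add. The linear parts produce the dissipation $\|\Lambda^{1-s}u\|_{L^2}^2+\|\Lambda^{2-s}\phi\|_{L^2}^2+\|\Lambda^{1-s}\phi\|_{L^2}^2$ together with its once-differentiated analogue. For the nonlinear terms one first uses the divergence structure $u\cdot\nabla u=\nabla\cdot(u\otimes u)$, $u\cdot\nabla\phi=\nabla\cdot(u\phi)$ and $\Delta\phi\cdot\nabla\phi=\nabla\cdot(\nabla\phi\otimes\nabla\phi)-\frac12\nabla|\nabla\phi|^2$ (the last gradient being annihilated by $\mathrm{div}\,u=0$) to move one derivative onto the test function, and then estimates $\|\Lambda^{-s}(fg)\|_{L^2}$ via the Hardy-Littlewood-Sobolev inequality $\|\Lambda^{-s}h\|_{L^2}\lesssim\|h\|_{L^{6/(3+2s)}}$ followed by Sobolev embedding; the hypothesis $s\in[0,\frac12]$ is exactly what keeps all the resulting Lebesgue exponents admissible. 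After a Young inequality each nonlinear contribution is bounded by a small multiple of the dissipation plus a product of norms of $u,\nabla u,\phi,\nabla\phi,\Delta\phi$, which is integrable on $(0,\infty)$ because of the global bounds quoted above, while the genuinely quadratic Cahn-Hilliard nonlinearity $\Delta(\phi^2)$ — which survives the expansion of $(\phi+\omega_0)^3$ since $\omega_0^2=1$ — is instead absorbed directly into the $\phi$-dissipation using the smallness of the solution. Integrating in time gives (\ref{1-4}).

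\emph{Step 2: family of energy inequalities.} Redoing the energy estimates underlying Theorem~\ref{thm1.1}, but applying $\Lambda^\ell$ ($0\le\ell\le N-1$) before testing and using the smallness (\ref{1-2}), one obtains
$$\frac{d}{dt}\,\mathcal{E}_\ell(t)+c_0\,\mathcal{D}_\ell(t)\le 0,$$
where $\mathcal{E}_\ell(t)\simeq\sum_{\ell\le k\le N}\|\Lambda^kU\|_{L^2}^2$ and $\mathcal{D}_\ell(t)\simeq\sum_{\ell\le k\le N}\big(\|\Lambda^{k+1}u\|_{L^2}^2+\|\Lambda^{k+1}\phi\|_{L^2}^2+\|\Lambda^{k+2}\phi\|_{L^2}^2\big)$. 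The one delicate point here is the convective term $u\cdot\nabla\phi$, whose differential order is below that of the fourth-order operator: exactly as in the proof of Theorem~\ref{thm1.1}, it is split by a Leibniz rule and the pieces are absorbed into the $\phi$-dissipation at the expense of the small size of the solution.

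\emph{Step 3: interpolation and iteration.} The Sobolev interpolation $\|\Lambda^kU\|_{L^2}\le C\|\Lambda^{-s}U\|_{L^2}^{1/(k+1+s)}\|\Lambda^{k+1}U\|_{L^2}^{(k+s)/(k+1+s)}$, combined with the uniform bound (\ref{1-4}) and the boundedness of $\mathcal{E}_0$, yields $\mathcal{E}_0^{1+\frac1s}\lesssim\mathcal{D}_0$; hence $\frac{d}{dt}\mathcal{E}_0+c\,\mathcal{E}_0^{1+1/s}\le0$ and $\mathcal{E}_0(t)\lesssim(1+t)^{-s}$ (for $s=0$ this reduces to the already known boundedness). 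One then iterates on $\ell$: interpolating $\|\Lambda^\ell U\|_{L^2}\le C\|\Lambda^{\ell-1}U\|_{L^2}^{1/2}\|\Lambda^{\ell+1}U\|_{L^2}^{1/2}$ and inserting the inductive bound $\mathcal{E}_{\ell-1}(t)\lesssim(1+t)^{-(\ell-1+s)}$ turns $\frac{d}{dt}\mathcal{E}_\ell+c_0\mathcal{D}_\ell\le0$ into the Riccati-type inequality $\frac{d}{dt}\mathcal{E}_\ell+c\,(1+t)^{\ell-1+s}\mathcal{E}_\ell^2\le0$, whence $\mathcal{E}_\ell(t)\lesssim(1+t)^{-(\ell+s)}$ and therefore $\|\Lambda^\ell U\|_{L^2}\lesssim(1+t)^{-(\ell+s)/2}$ for $\ell=0,1,\dots,N-1$, which is (\ref{1-5}). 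I expect the main obstacle throughout to be Step~1: fitting the coupling force $\Delta\phi\cdot\nabla\phi$, the low-order convection $u\cdot\nabla\phi$, and the quadratic potential term $\Delta(\phi^2)$ into the $\dot H^{-s}$-dissipation while leaving only time-integrable remainders, uniformly over $s\in[0,\frac12]$.
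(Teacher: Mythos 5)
Your proposal is correct and follows essentially the same route as the paper: the Guo--Wang pure energy method, propagating the $\dot H^{-s}$ norm via the Hardy--Littlewood--Sobolev estimate of Lemma \ref{lem2.3} and then converting the higher-order energy inequality (\ref{9-12})--(\ref{9-13}) into a decay ODE through the interpolation of Lemma \ref{lem2.2}. The only differences are organizational --- you close the negative-norm estimate by moving a derivative onto the test function and absorbing via Young's inequality, where the paper keeps the factor $\|\Lambda^{-s}(u,\phi,\nabla\phi)\|_{L^2}$ and closes with $\mathcal{E}_{-s}(t)\le C_0\bigl(1+\sup_{0\le\tau\le t}\sqrt{\mathcal{E}_{-s}(\tau)}\bigr)$, and you iterate the decay across derivative levels through a Riccati-type inequality instead of applying the negative-norm interpolation $\|\Lambda^{l}f\|_{L^2}\le\|\Lambda^{l+1}f\|_{L^2}^{1-\theta}\|f\|_{\dot H^{-s}}^{\theta}$ directly at every level --- and both variants produce the same rate $(1+t)^{-(k+s)/2}$.
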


Note that the Hardy-Littlewood-Sobolev theorem implies that for $p\in[\frac32,2]$,  $L^{p}(\mathbb{R}^3)\subset\dot{H}^{-s}(\mathbb{R}^3)$ with $s=3(\frac1p-\frac12)\in[0,\frac12]$. Then, on the basis of Theorem \ref{thm1.1}, we easily obtain the following corollary of the optimal decay estimates.
\begin{corollary}
\label{cor1.1}Under the assumptions of Theorem \ref{thm1.1}, if we replace the $\dot{H}^{-s}(\mathbb{R}^3)$ assumption by $(u_0,\phi_0,\nabla\phi_0)\in L^{p}(\mathbb{R}^3)$ $(\frac32\leq p\leq 2)$, then the following decay estimate holds:
\begin{equation}
\label{1-9}
\|\Lambda^ku(t)\|_{L^2}+ \|\Lambda^{k} \phi(t)\|_{L^2}+ \|\Lambda^{k+1} \phi(t)\|_{L^2}\leq C(1+t)^{-\sigma_k},\quad\hbox{for}~k=0,1,\cdots, N-1,
\end{equation}where $$\sigma_k=\frac32\left(\frac1p -\frac12\right)+\frac l2 .
$$
\end{corollary}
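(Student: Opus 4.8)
The proof is a soft corollary of Theorem~\ref{thm1.2}: all that is needed is to convert the $L^p$ integrability of the data into membership in a homogeneous negative Sobolev space, and then to substitute the resulting index into the decay rate already established there. The plan splits into three short steps.

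First I would record the embedding. For $f\in L^p(\mathbb{R}^3)$ the Riesz potential $\Lambda^{-s}f$ has Fourier symbol $|\xi|^{-s}\widehat f(\xi)$, and, as already noted before the statement, the Hardy--Littlewood--Sobolev inequality gives $\|\Lambda^{-s}f\|_{L^2}\lesssim\|f\|_{L^p}$ precisely when $s=3\bigl(\tfrac1p-\tfrac12\bigr)$. Moreover $s$ runs over $[0,\tfrac12]$ exactly as $p$ runs over $[\tfrac32,2]$, with $p=2$ giving $s=0$ and $p=\tfrac32$ giving the extremal value $s=\tfrac12$. Hence $(u_0,\phi_0,\nabla\phi_0)\in L^p(\mathbb{R}^3)$ with $\tfrac32\le p\le2$ implies $(u_0,\phi_0,\nabla\phi_0)\in\dot H^{-s}(\mathbb{R}^3)$ with this $s\in[0,\tfrac12]$.

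Next I would verify that every hypothesis of Theorem~\ref{thm1.2} is now in force: the smallness condition (\ref{1-2}) and the regularity $(u_0,\phi_0)\in H^N\times H^{N+1}$ are inherited from the standing assumptions of Theorem~\ref{thm1.1}, while the requirement $(u_0,\nabla\phi_0)\in\dot H^{-s}$ for some $s\in[0,\tfrac12]$ is exactly what the first step supplies. Applying Theorem~\ref{thm1.2}, in particular estimate (\ref{1-5}), then gives for $k=0,1,\dots,N-1$
\[
\|\Lambda^ku(t)\|_{L^2}+\|\Lambda^k\phi(t)\|_{L^2}+\|\Lambda^{k+1}\phi(t)\|_{L^2}\le C(1+t)^{-\frac{s+k}{2}} .
\]
Finally, substituting $s=3\bigl(\tfrac1p-\tfrac12\bigr)$ into the exponent yields $\tfrac{s+k}{2}=\tfrac32\bigl(\tfrac1p-\tfrac12\bigr)+\tfrac k2=\sigma_k$, which is precisely (\ref{1-9}).

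I do not expect any real obstacle here: the entire analytic content is carried by Theorems~\ref{thm1.1}--\ref{thm1.2}, and the present statement is merely their repackaging under the more concrete $L^p$ hypothesis. The single point deserving a line of care is the bookkeeping of the Hardy--Littlewood--Sobolev exponent relation $s=3(\tfrac1p-\tfrac12)$, which is what forces the admissible range $p\in[\tfrac32,2]$ stated in the corollary; the endpoint $p=\tfrac32$, i.e.\ $s=\tfrac12$, is permitted because it still lies in the range $s\in[0,\tfrac12]$ allowed by Theorem~\ref{thm1.2}.
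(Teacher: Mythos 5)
Your proposal is correct and follows exactly the route the paper intends: the Hardy--Littlewood--Sobolev embedding $L^p(\mathbb{R}^3)\subset\dot H^{-s}(\mathbb{R}^3)$ with $s=3(\tfrac1p-\tfrac12)\in[0,\tfrac12]$, followed by an application of the decay theorem and substitution of $s$ into the exponent (the paper's $l$ in $\sigma_k$ is a typo for $k$, and your reading $\sigma_k=\tfrac{s+k}{2}$ is the intended one). The paper gives no separate proof beyond the remark preceding the corollary, and your write-up supplies the same argument with slightly more explicit bookkeeping.
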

\begin{remark}Since the decay rate (\ref{1-9}) is equivalent to the decay rate of second order heat equation
 \begin{equation} \label{heat}
\left\{\begin{aligned}
      & u_t +\Delta  u=0, \quad x\in\mathbb{R}^3,~t\geq0,\\
       &u(x,0)=u_0(x),
                  \end{aligned}\right.
                          \end{equation}
                          Hence, it is optimal.
\end{remark}

The main difficulties to consider the Cauchy problem of 3D Navier-Stokes-Cahn-Hilliard equations are how  to deal with the convective term $u\cdot\nabla \phi$, the coupling between $u$ and $\phi$ and the  linear term of the double-well potential. Since the principle part of (\ref{1-1})$_3$ is a fourth-order linear term and the convective term   is only a first-order nonlinear term,  due to Sobolev's embedding $L^{\frac{6}{3-2s}}(\mathbb{R}^3)\subset \dot{H}^s(\mathbb{R}^3)$, we cannot control $\|\nabla^kf(\phi)\|_{L^p}$ through $\|\nabla^{k+2}\phi\|_{L^{2}}$. In order to overcome this difficulty, we borrow a second-order term from the double-well potential, rewrite (\ref{1-1})$_3$ as
     \begin{equation} \label{1-10}
    \partial_t\phi+\Delta^2\phi- \Delta \phi=-u\cdot\nabla \phi+\Delta\left[(\phi+\omega_0)^3- 2\phi\right].
\end{equation}
Hence, one can control the convective term by the last term of the left hand side of equation  (\ref{1-10}). Moreover, in order to overcome the difficulty caused by the coupling between $u$ and $\phi$, we   assume   the initial data $\|(u_0,\nabla\phi_0) \|_{\dot{H}^{\frac12} }$ is sufficiently small, obtain a priori estimates on $(u, \nabla\phi)$.
On the other hand,  we establish the suitable a priori estimates in the negative Sobolev space $\dot{H}^{-s}$ ($0\leq s\leq\frac12$),   obtain the  optimal decay rate of strong solutions for the Cauchy problem of Navier-Stokes-Cahn-Hilliard equation in the whole spaces $\mathbb{R}^3$.

The structure of this paper is organized as follows.  In Section 2, we introduce some preliminary results, which are useful to prove our main results. Section 3 is  devoted to prove the local well-posedness of solutions. In Section 4, we  prove the small initial data global well-posedness of solutions in critical Sobolev space. Section 5 is devoted to study the decay rate of solutions for system (\ref{1-1}).
\section{Preliminaries}
In this section, we introduce some helpful results in $\mathbb{R}^3$.

There is a useful Sobolev embedding theorem, which will be used in the proofs.
\begin{lemma}[\cite{Rob}]\label{x}
There exists a constant $c$ such that for $0\leq s<\frac32$,
$$
\|u\|_{L^{\frac6{3-2s}}}\leq c\|u\|_{\dot{H}^s}\quad\hbox{for~all}~~u\in \dot{H}^s(\mathbb{R}^3).
$$
\end{lemma}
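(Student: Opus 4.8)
The plan is to deduce this classical Sobolev embedding from the mapping properties of the Riesz potential on Lebesgue spaces, i.e.\ from the Hardy--Littlewood--Sobolev inequality. When $s=0$ the target exponent is $\frac{6}{3-2s}=2$ and, since $\dot{H}^0(\mathbb{R}^3)=L^2(\mathbb{R}^3)$, the inequality is a triviality; so I would assume $0<s<\frac32$ throughout.

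First I would set $g:=\Lambda^{s}u$, so that $g\in L^2(\mathbb{R}^3)$ with $\|g\|_{L^2}=\|u\|_{\dot{H}^s}$ and $u=\Lambda^{-s}g$. On the Fourier side $\widehat{\Lambda^{-s}g}(\xi)=|\xi|^{-s}\widehat{g}(\xi)$, and for $0<s<3$ this is exactly the Riesz potential of order $s$ in $\mathbb{R}^3$, namely $u=\Lambda^{-s}g=c_s\,\big(|\cdot|^{-(3-s)}\ast g\big)$ with $c_s$ depending only on $s$. Second, I would invoke the Hardy--Littlewood--Sobolev inequality: for $0<s<3$ and exponents $1<p<q<\infty$ with $\frac1q=\frac1p-\frac s3$ one has $\big\||\cdot|^{-(3-s)}\ast g\big\|_{L^q}\lesssim\|g\|_{L^p}$. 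Choosing $p=2$ forces $\frac1q=\frac12-\frac s3=\frac{3-2s}{6}$, that is $q=\frac{6}{3-2s}$; the condition $q<\infty$ is then precisely $s<\frac32$ and $q>p$ is precisely $s>0$, both of which hold under the standing assumption. Therefore
\[
\|u\|_{L^{\frac{6}{3-2s}}}=\big\|c_s\,(|\cdot|^{-(3-s)}\ast g)\big\|_{L^q}\lesssim\|g\|_{L^2}=\|u\|_{\dot{H}^s},
\]
which is the assertion, with a constant depending only on $s$ and the dimension.

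There is no substantive obstacle in this argument; the only point that deserves care is the range of $s$, because the endpoint $s=\frac32$ would force $q=\infty$, where Hardy--Littlewood--Sobolev fails---this is exactly why the statement requires the strict inequality. An alternative, self-contained route avoiding the explicit Riesz kernel is a Littlewood--Paley argument: writing $u=\sum_{j}\Delta_j u$, Bernstein's inequality gives $\|\Delta_j u\|_{L^q}\lesssim 2^{3j(\frac12-\frac1q)}\|\Delta_j u\|_{L^2}=2^{js}\|\Delta_j u\|_{L^2}$, and since $q=\frac{6}{3-2s}\ge2$ one combines the Littlewood--Paley characterization of $L^q$ with Minkowski's inequality (valid for exponents $\ge2$) to bound $\|u\|_{L^q}$ by $\big(\sum_j 2^{2js}\|\Delta_j u\|_{L^2}^2\big)^{1/2}=\|u\|_{\dot{H}^s}$. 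Since the paper attributes the lemma to \cite{Rob}, I would ultimately just cite that reference rather than carry out either computation in full.
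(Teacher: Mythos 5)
Your argument is correct and complete: reducing the embedding to the boundedness of the Riesz potential $\Lambda^{-s}$ from $L^2$ to $L^{6/(3-2s)}$ via Hardy--Littlewood--Sobolev is the standard proof, your exponent bookkeeping is right, and you correctly identify why the endpoint $s=\tfrac32$ must be excluded. The paper itself gives no proof of this lemma --- it is stated as a quoted result from \cite{Rob} --- so there is nothing to compare against beyond noting that either of your two routes (Riesz potential or Littlewood--Paley with Bernstein) is a legitimate substitute for the citation.
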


The following Gagliardo-Nirenberg inequality was proved in \cite{Nirenberg}.
\begin{lemma}[\cite{Nirenberg}]
\label{lem2.1}
Let $0\leq m,\alpha\leq l$, then we have
\begin{equation}\label{2-1}
\|\Lambda^{\alpha}f\|_{L^p}\lesssim\|\Lambda^mf\|_{L^q}^{1-\theta}\|\Lambda^lf\|_{L^r}^{\theta},
\end{equation}
where $\theta\in[0,1]$ and $\alpha$ satisfies
\begin{equation}\label{2-2}
\frac{\alpha}3-\frac1p=\left(\frac m3-\frac1q\right)(1-\theta)+\left(\frac l3-\frac1r\right)\theta.
\end{equation}
Here, when $p=\infty$, we require that $0<\theta<1$.
\end{lemma}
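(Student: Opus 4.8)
The plan is to prove (\ref{2-1}) by a Littlewood--Paley decomposition combined with Bernstein's inequalities, after first recording why (\ref{2-2}) is the only admissible relation among the exponents. Write $s=\frac{\alpha}{3}-\frac1p$, $s_1=\frac m3-\frac1q$ and $s_2=\frac l3-\frac1r$, so that (\ref{2-2}) reads $s=(1-\theta)s_1+\theta s_2$. Applying the claimed inequality to the dilate $f_\lambda(x):=f(\lambda x)$ and using $\|\Lambda^\sigma f_\lambda\|_{L^a(\mathbb{R}^3)}=\lambda^{\sigma-3/a}\|\Lambda^\sigma f\|_{L^a}$, the left-hand side scales like $\lambda^{3s}$ while the right-hand side scales like $\lambda^{3[(1-\theta)s_1+\theta s_2]}$; letting $\lambda\to 0$ and $\lambda\to\infty$ shows that no such inequality can hold unless these two exponents coincide, which is precisely (\ref{2-2}). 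Assuming $s_1\neq s_2$ (the case $s_1=s_2$ reducing to ordinary interpolation of $L^p$ norms), this pins down $\theta=(s-s_1)/(s_2-s_1)$, and the hypothesis $\theta\in[0,1]$ says exactly that $s$ lies between $s_1$ and $s_2$.

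Next I would fix a homogeneous Littlewood--Paley partition $f=\sum_{j\in\mathbb{Z}}\dot{\Delta}_jf$, with each $\dot{\Delta}_jf$ frequency-localized to $\{|\xi|\sim 2^j\}$. On a single block the multiplier $\Lambda^\sigma$ acts like multiplication by $2^{j\sigma}$, so $\|\Lambda^\alpha\dot{\Delta}_jf\|_{L^p}\approx 2^{j\alpha}\|\dot{\Delta}_jf\|_{L^p}$, while Bernstein's inequality trades integrability for a power of $2^j$, for instance $\|\dot{\Delta}_jf\|_{L^p}\lesssim 2^{3j(1/q-1/p)}\|\dot{\Delta}_jf\|_{L^q}$ when $q\le p$, and likewise with $r$ in place of $q$. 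Since the blocks are bounded on each $L^a$ ($\|\Lambda^m\dot{\Delta}_jf\|_{L^q}\lesssim\|\Lambda^mf\|_{L^q}$, and similarly for $\Lambda^l$), one gets that each dyadic piece of $\Lambda^\alpha f$ is controlled both by $2^{3j(s-s_1)}\|\Lambda^mf\|_{L^q}$ and by $2^{3j(s-s_2)}\|\Lambda^lf\|_{L^r}$. Splitting $\Lambda^\alpha f=\sum_j\Lambda^\alpha\dot{\Delta}_jf$ at an integer threshold $N$, bounding the terms with $j\le N$ by the first estimate and those with $j>N$ by the second, and applying the triangle inequality in $L^p$, one is reduced to the two geometric series $\sum_{j\le N}2^{3j(s-s_1)}$ and $\sum_{j>N}2^{3j(s-s_2)}$. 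Since $s-s_1=\theta(s_2-s_1)$ and $s-s_2=-(1-\theta)(s_2-s_1)$, these converge when $s_2>s_1$ and $0<\theta<1$, with sums $\approx 2^{3N(s-s_1)}$ and $\approx 2^{3N(s-s_2)}$ respectively; choosing $N$ so that $2^{3N(s-s_1)}\|\Lambda^mf\|_{L^q}$ and $2^{3N(s-s_2)}\|\Lambda^lf\|_{L^r}$ balance then gives $\|\Lambda^\alpha f\|_{L^p}\lesssim\|\Lambda^mf\|_{L^q}^{1-\theta}\|\Lambda^lf\|_{L^r}^{\theta}$, which is (\ref{2-1}). At the endpoints $\theta\in\{0,1\}$ a single Bernstein estimate applied to the whole sum suffices, and for $p=\infty$ the same splitting works but now needs genuine geometric decay $0<\theta<1$ on both tails, which is the reason for that restriction.

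The step I expect to be the main obstacle is the sign-bookkeeping hidden in combining the two dyadic bounds: one has to check that the admissibility data ($0\le m,\alpha\le l$, $\theta\in[0,1]$ and (\ref{2-2})) really do force both bounds to produce geometrically convergent series on the low- and high-frequency sides, and one has to respect the Bernstein-direction constraints --- when, for example, $q>p$ (so that necessarily $r<p$ and $p$ lies between $q$ and $r$ in integrability) the roles of $(m,q)$ and $(l,r)$ in the splitting have to be interchanged --- while the borderline situations ($q$ or $r$ equal to $\infty$, or $\alpha=m$, or $\alpha=l$, or $s_1=s_2$, or $\theta\in\{0,1\}$) have to be handled separately by a plain Bernstein or H\"older interpolation. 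For the integer-order instances actually used later in this paper one can bypass Littlewood--Paley altogether and argue as in \cite{Nirenberg}, iterating elementary one-dimensional interpolation inequalities produced by integration by parts and combining them with H\"older's inequality in the remaining variables.
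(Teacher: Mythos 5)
The paper offers no proof of this lemma at all: it is quoted verbatim with a citation to Nirenberg's 1959 paper, whose original argument is the elementary one you mention in your last sentence (iterated one-dimensional interpolation inequalities obtained by integration by parts, combined with H\"older in the remaining variables). Your route --- scaling to justify \eqref{2-2}, then a homogeneous Littlewood--Paley decomposition, the two Bernstein-type bounds on each dyadic block, a splitting at a threshold $N$, summation of the two geometric series, and optimization of $N$ to balance the two contributions --- is the standard modern proof and is correct in its main line; it has the advantage of handling fractional $\alpha,m,l$ (i.e.\ genuine $\Lambda^\sigma$ rather than $\nabla^k$) directly, which is what the paper actually needs since it applies the lemma with half-integer orders, whereas Nirenberg's original argument is for integer derivatives. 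The endpoint and degenerate cases you set aside ($\theta\in\{0,1\}$, $s_1=s_2$, $q$ or $r$ infinite, convergence of the homogeneous decomposition modulo polynomials) are indeed routine.

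One caveat: your parenthetical claim that $q>p$ forces $r<p$ does not follow from the stated hypotheses. One can satisfy \eqref{2-2} with $0\le m,\alpha\le l$, $\theta\in(0,1)$ and $p<\min(q,r)$ (for instance $q=r=2$, $m=0$, $l=2$, $\alpha=9/5$, $\theta=1/2$, $p=30/23$), and in that regime the inequality as printed is actually false on $\mathbb{R}^3$ (test it on a sum of $N$ widely separated bumps: the left side grows like $N^{1/p}$ while the right side grows like $N^{1/2}$). This is a defect of the lemma as quoted --- the usual Gagliardo--Nirenberg statement carries an additional admissibility restriction that rules out a decrease of integrability --- rather than of your proof; your Littlewood--Paley argument correctly refuses to close exactly there, since Bernstein only moves integrability upward. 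Since every instance invoked later in the paper interpolates $L^2$-based norms into $L^3$, $L^6$, $L^{\infty}$ or similar (so $p\ge q,r$ throughout), your proof covers everything that is actually used.
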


The following  Kato-Ponce inequality  is of great importance in the proofs.
\begin{lemma}[\cite{KP}]\label{KP}
Let $1<p<\infty$, $s>0$. There exists a positive constant $C$ such that
\begin{equation}
\label{K-1}
\|\Lambda^s(fg)-f\Lambda^sg\|_{L^p}\leq C(\|\nabla f\|_{L^{p_1}}\|\Lambda^{s-1}g\|_{L^{p_2}}+\|\Lambda^sf\|_{L^{q_1}}\|g\|_{L^{q_2}}),
\end{equation}
and
\begin{equation}
\label{K-2}
\|\Lambda^s{fg}\|_{L^p}\leq C(\|f\|_{L^{p_1}}\|\Lambda^sg\|_{L^{p_2}}+\|\Lambda^sf\|_{L^{q_1}}\|g\|_{L^{q_2}},
\end{equation}
where $p_2,q_2\in(1,\infty)$ satisfying $\frac1p=\frac1{p_1}+\frac1{p_2}=\frac1{q_1}+\frac1{q_2}$.
\end{lemma}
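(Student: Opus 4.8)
The plan is to reduce both inequalities in Lemma \ref{KP} to the $L^{p_1}\times L^{p_2}\to L^{p}$ boundedness of bilinear Fourier multiplier operators of Coifman--Meyer type, by means of a Littlewood--Paley (Bony paraproduct) decomposition. Using the Fourier convention of (\ref{1-4c}), pointwise multiplication corresponds on the Fourier side to convolution, while $\Lambda^{s}$ and $\partial_\ell$ correspond to the symbols $|\cdot|^{s}$ and $(\mathrm{const})\,\xi_\ell$; writing $\xi$ for the frequency dual to $f$ and $\eta$ for that dual to $g$, one checks that $\Lambda^{s}(fg)$ is the bilinear operator with symbol $|\xi+\eta|^{s}$, that $f\,\Lambda^{s}g$ is the bilinear operator with symbol $|\eta|^{s}$, and hence that $\Lambda^{s}(fg)-f\,\Lambda^{s}g$ is the bilinear operator with symbol $\sigma(\xi,\eta)=|\xi+\eta|^{s}-|\eta|^{s}$. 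After a harmless smooth Littlewood--Paley localisation away from the origin (to sidestep the non-smoothness of $|\cdot|^{s}$ at $0$, the remaining finitely many low-frequency interactions being handled directly), I would introduce a smooth conical partition of unity splitting frequency space into $\mathcal{C}_1=\{|\xi|\lesssim|\eta|\}$, $\mathcal{C}_2=\{|\eta|\lesssim|\xi|\}$ and the diagonal $\mathcal{C}_3=\{|\xi|\sim|\eta|\}$, and estimate each piece separately.

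For (\ref{K-2}), on $\mathcal{C}_1$ one has $|\xi+\eta|\sim|\eta|$, so $|\xi+\eta|^{s}\chi_{\mathcal{C}_1}(\xi,\eta)=|\eta|^{s}\rho(\xi,\eta)$ with $\rho$ smooth, homogeneous of degree $0$, and satisfying the Mikhlin--Coifman--Meyer derivative bounds; this piece is thus a Coifman--Meyer bilinear operator applied to $(f,\Lambda^{s}g)$ and is controlled by $\|f\|_{L^{p_1}}\|\Lambda^{s}g\|_{L^{p_2}}$. The cone $\mathcal{C}_2$ is symmetric and yields $\|\Lambda^{s}f\|_{L^{q_1}}\|g\|_{L^{q_2}}$. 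On $\mathcal{C}_3$ one sums the Littlewood--Paley pieces by hand: since $\Delta_j f\,\Delta_k g$ with $|j-k|\le C$ has frequencies of size $\lesssim 2^{j}$, Bernstein and H\"older give $\|\Lambda^{s}(\Delta_j f\,\Delta_k g)\|_{L^{p}}\lesssim 2^{js}\|\Delta_j f\|_{L^{q_1}}\|\Delta_k g\|_{L^{q_2}}$, and summing in $j$ via Cauchy--Schwarz and the square-function characterisations of $\|\Lambda^{s}f\|_{L^{q_1}}$ and $\|g\|_{L^{q_2}}$ (valid since $q_1,q_2\in(1,\infty)$) again gives $\|\Lambda^{s}f\|_{L^{q_1}}\|g\|_{L^{q_2}}$.

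The estimate (\ref{K-1}) follows the same scheme, the commutator structure producing the essential extra cancellation on $\mathcal{C}_1$. There I would Taylor-expand the symbol in $\xi$ about $\xi=0$: one has
\[
|\xi+\eta|^{s}-|\eta|^{s}=\sum_{\ell=1}^{3}\xi_\ell\,\sigma_\ell(\xi,\eta),\qquad \sigma_\ell(\xi,\eta)=s\int_{0}^{1}|\eta+t\xi|^{s-2}(\eta_\ell+t\xi_\ell)\,dt,
\]
and on $\mathcal{C}_1$, where $|\eta+t\xi|\sim|\eta|$ for all $t\in[0,1]$, each $\sigma_\ell$ is smooth and homogeneous of degree $s-1$. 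Factoring $\xi_\ell\sigma_\ell=\xi_\ell\cdot|\eta|^{s-1}\cdot\bigl(|\eta|^{-(s-1)}\sigma_\ell\bigr)$ exhibits this piece as a Coifman--Meyer bilinear operator with a degree-$0$ symbol applied to $(\partial_\ell f,\Lambda^{s-1}g)$, hence bounded by $\|\nabla f\|_{L^{p_1}}\|\Lambda^{s-1}g\|_{L^{p_2}}$. On $\mathcal{C}_2\cup\mathcal{C}_3$ there is no cancellation to exploit, but also none is needed: $|\sigma(\xi,\eta)|\le|\xi+\eta|^{s}+|\eta|^{s}\lesssim|\xi|^{s}$ on $\mathcal{C}_2$ (and $\lesssim 2^{js}$ on the diagonal), so one distributes $\Lambda^{s}$ onto $f$ and sums exactly as in the $\mathcal{C}_3$ argument above to obtain $\|\Lambda^{s}f\|_{L^{q_1}}\|g\|_{L^{q_2}}$. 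Adding the three contributions gives (\ref{K-1}).

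I expect the genuine obstacle to be analytic rather than combinatorial: one must invoke (and, for a self-contained treatment, prove) the Coifman--Meyer multilinear multiplier theorem, which rests on multilinear Calder\'on--Zygmund theory, and one must verify carefully that each symbol above, restricted to its cone and localised, really does satisfy $|\partial_\xi^{\alpha}\partial_\eta^{\beta}m(\xi,\eta)|\lesssim(|\xi|+|\eta|)^{-|\alpha|-|\beta|}$ uniformly. Two subsidiary points also need care: the singularity of $|\cdot|^{s}$ at the origin, handled by the initial Littlewood--Paley localisation plus a direct estimate of the low-frequency interactions; and the endpoint exponents $p_1=\infty$ or $q_1=\infty$ (allowed by the statement, which only restricts $p_2,q_2$), where the square-function step on $\mathcal{C}_3$ must be replaced by a BMO/Hardy-space duality argument. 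A more elementary alternative, avoiding Coifman--Meyer, is to write $\Lambda^{s}(fg)-f\Lambda^{s}g=\int K_{s}(x-y)\bigl(f(y)-f(x)\bigr)g(y)\,dy$ with $K_{s}$ the (distributional) kernel of $\Lambda^{s}$, expand $f(y)-f(x)=(y-x)\cdot\int_{0}^{1}\nabla f(x+t(y-x))\,dt$, and bound the resulting multilinear singular integral by maximal functions of $\nabla f$ and $\Lambda^{s-1}g$; its cost is the need to regularise $K_{s}$ when $s\ge1$.
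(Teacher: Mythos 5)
The paper offers no proof of this lemma: it is quoted from the literature (\cite{KP}; the homogeneous form with general H\"older exponents stated here really comes from later refinements of Kato--Ponce, e.g.\ Kenig--Ponce--Vega and Grafakos--Oh). So the only meaningful comparison is with the standard proofs, and your paraproduct/Coifman--Meyer outline is exactly the standard modern route: a conical decomposition of the bilinear symbol, a Taylor expansion of $|\xi+\eta|^{s}-|\eta|^{s}$ in $\xi$ on the cone $|\xi|\ll|\eta|$ to extract the factor $\xi_{\ell}$ that becomes $\nabla f$ paired with $\Lambda^{s-1}g$, Coifman--Meyer on the off-diagonal cones, and a by-hand Littlewood--Paley summation on the diagonal. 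Your Taylor identity is correct, the homogeneity bookkeeping is correct, and you correctly name the genuine analytic inputs (the Coifman--Meyer theorem, the $L^{\infty}$ endpoints for $p_{1}$ or $q_{1}$, the low-frequency singularity of $|\cdot|^{s}$).

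Two steps are thinner than you present them. First, on $\mathcal{C}_{2}\cup\mathcal{C}_{3}$ you pass from the pointwise bound $|\sigma(\xi,\eta)|\lesssim|\xi|^{s}$ to ``distributing $\Lambda^{s}$ onto $f$''; but a pointwise symbol bound does not give $L^{p}$ boundedness of a bilinear multiplier, and the piece $|\eta|^{s}$ of the symbol, restricted to $|\eta|\lesssim|\xi|$ and divided by $|\xi|^{s}$, fails the Coifman--Meyer derivative bounds near $\eta=0$: for $|\beta|>s$ one has $|\partial_{\eta}^{\beta}|\eta|^{s}|\sim|\eta|^{s-|\beta|}$, which blows up relative to the required $|\xi|^{s-|\beta|}$ as $\eta\to0$. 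That term must genuinely be handled by the dyadic summation, using Bernstein to convert $\Lambda^{s}S_{j}g$ into $2^{js}g$ and attaching $2^{js}$ to $\Delta_{j}f$. Second, on the diagonal the naive Cauchy--Schwarz over $j$ yields the Besov quantity $\bigl(\sum_{j}2^{2js}\|\Delta_{j}f\|_{L^{q_{1}}}^{2}\bigr)^{1/2}$, which for $q_{1}>2$ is not controlled by $\|\Lambda^{s}f\|_{L^{q_{1}}}$; one must first apply an output projection $\Delta_{m}$, use $s>0$ to gain the geometric factor $2^{(m-j)s}$ in the sum over $j\ge m-C$, and only then invoke the square-function characterisation. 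Both repairs are standard, so your proposal is a correct outline of a known proof rather than a complete argument, which is an entirely reasonable standard for a lemma the paper itself only cites.
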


We also introduce the Hardy-Littlewood-Sobolev theorem, which implies the following $L^p$ type inequality.
\begin{lemma}[\cite{Stein,15}]
\label{lem2.3}
Let $0\leq s<\frac32$, $1<p\leq 2$ and $\frac12+\frac s3=\frac1p$, then
\begin{equation}
\label{2-4}
\|f\|_{\dot{H}^{-s}}\lesssim\|f\|_{L^p}.
\end{equation}
\end{lemma}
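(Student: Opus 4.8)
The plan is to prove the inequality by duality, reducing it to the Sobolev embedding already recorded in Lemma \ref{x}. Since the homogeneous space $\dot{H}^{-s}$ is the dual of $\dot{H}^{s}$ under the $L^2$ pairing (via Plancherel's theorem), for $0<s<\frac32$ and $f$ in a suitable dense class (e.g. Schwartz functions) one has the characterization
\[
\|f\|_{\dot{H}^{-s}}=\sup\Big\{\,\big|\langle f,g\rangle\big|:\ g\in\dot{H}^{s}(\mathbb{R}^3),\ \|g\|_{\dot{H}^{s}}\le 1\,\Big\}.
\]
Indeed, writing $\langle f,g\rangle=\int_{\mathbb{R}^3}|\xi|^{-s}\hat f(\xi)\,|\xi|^{s}\overline{\hat g(\xi)}\,d\xi$ and applying Cauchy--Schwarz on the frequency side gives $|\langle f,g\rangle|\le\|f\|_{\dot H^{-s}}\|g\|_{\dot H^{s}}$, while equality is attained by taking $\hat g(\xi)$ proportional to $|\xi|^{-2s}\hat f(\xi)$. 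The case $s=0$ is trivial (then $p=2$ and $\dot H^0=L^2$), so I assume $0<s<\frac32$.

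First I would fix $g$ with $\|g\|_{\dot H^s}\le 1$ and estimate the pairing by H\"older's inequality,
\[
\big|\langle f,g\rangle\big|\le\|f\|_{L^p}\,\|g\|_{L^{p'}},\qquad \tfrac1p+\tfrac1{p'}=1.
\]
Second, I would verify that the conjugate exponent $p'$ is exactly the Sobolev exponent in Lemma \ref{x}: from the hypothesis $\frac1p=\frac12+\frac s3$ one computes $\frac1{p'}=1-\frac1p=\frac12-\frac s3=\frac{3-2s}{6}$, so $p'=\frac{6}{3-2s}$. Third, since $0<s<\frac32$, Lemma \ref{x} yields $\|g\|_{L^{p'}}=\|g\|_{L^{6/(3-2s)}}\lesssim\|g\|_{\dot H^s}\le 1$. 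Combining these bounds and taking the supremum over $g$ produces
\[
\|f\|_{\dot H^{-s}}=\sup_{\|g\|_{\dot H^s}\le1}\big|\langle f,g\rangle\big|\lesssim\|f\|_{L^p},
\]
and a standard density argument then extends the estimate from Schwartz functions to all $f\in L^p(\mathbb{R}^3)$.

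The main obstacle is not any single estimate but the careful justification of the duality step for homogeneous Sobolev spaces. One must check that the pairing $\langle f,g\rangle$ is well defined, that the supremum genuinely computes the $\dot H^{-s}$ norm (and not merely a lower bound for it), and that $\dot H^{-s}$ is realized honestly as a space of tempered distributions in the range $0<s<\frac32$ on $\mathbb{R}^3$; this is precisely where the restriction $s<\frac32$ enters, guaranteeing the embedding $\dot H^s\subset L^{p'}$ with finite $p'$ and avoiding the low-frequency pathologies of homogeneous spaces. An alternative and equally clean route would bypass duality entirely: write $\Lambda^{-s}$ as the Riesz potential $I_s$ with kernel $c\,|x|^{-(3-s)}$ and invoke the Hardy--Littlewood--Sobolev inequality with target exponent $q=2$, whose gain-of-integrability relation $\frac12=\frac1p-\frac s3$ coincides exactly with the stated hypothesis; I would keep this as a backup in case one prefers to cite the mapping properties of Riesz potentials directly.
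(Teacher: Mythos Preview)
Your duality argument is correct and complete: the pairing estimate via H\"older together with the Sobolev embedding of Lemma~\ref{x} gives exactly the stated bound, and your handling of the endpoint $s=0$ and the range restriction $s<\tfrac32$ is accurate.

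However, the paper does not actually supply a proof of this lemma. It is stated in the preliminaries section with citations to \cite{Stein,15} and introduced with the sentence ``We also introduce the Hardy-Littlewood-Sobolev theorem, which implies the following $L^p$ type inequality.'' So the paper's implicit route is precisely your ``backup'': write $\|f\|_{\dot H^{-s}}=\|\Lambda^{-s}f\|_{L^2}=\|I_s f\|_{L^2}$ and invoke the Hardy--Littlewood--Sobolev mapping $I_s:L^p\to L^2$ under the relation $\tfrac12=\tfrac1p-\tfrac s3$. Your primary argument reaches the same conclusion by dualizing the forward Sobolev embedding instead of citing HLS directly; the two are equivalent (HLS and the embedding $\dot H^s\hookrightarrow L^{6/(3-2s)}$ are dual statements), so the difference is cosmetic. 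If you want to match the paper's framing, lead with the Riesz-potential formulation; if you prefer a self-contained argument using only the lemmas already stated in the paper, your duality proof is the cleaner choice since it rests solely on Lemma~\ref{x}.
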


The  special Sobolev interpolation lemma will be used in the proof of Theorem \ref{thm1.1}.
\begin{lemma}[\cite{W,Stein}]
\label{lem2.2}
Let $s,k\geq0$ and $l\geq0$, then
\begin{equation}
\label{2-3}
\|\Lambda^lf\|_{L^2}\leq\|\Lambda^{l+1}f\|_{L^2}^{ 1-\theta }\|f\|_{\dot{H}^{-s}}^{ \theta },\quad\hbox{with}~\theta=\frac1{l+1+s}.
\end{equation}
\end{lemma}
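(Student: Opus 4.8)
The plan is to establish (\ref{2-3}) by a single application of H\"older's inequality on the Fourier side. By Plancherel's theorem,
\[
\|\Lambda^{l}f\|_{L^{2}}^{2}=\int_{\mathbb{R}^{3}}|\xi|^{2l}|\widehat{f}(\xi)|^{2}\,d\xi,\qquad \|\Lambda^{l+1}f\|_{L^{2}}^{2}=\int_{\mathbb{R}^{3}}|\xi|^{2(l+1)}|\widehat{f}(\xi)|^{2}\,d\xi,
\]
and $\|f\|_{\dot{H}^{-s}}^{2}=\int_{\mathbb{R}^{3}}|\xi|^{-2s}|\widehat{f}(\xi)|^{2}\,d\xi$. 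If one of the two factors on the right-hand side of (\ref{2-3}) is infinite the estimate is trivial, so we may assume both of these integrals are finite.

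The key point is the pointwise algebraic identity
\[
|\xi|^{2l}|\widehat{f}(\xi)|^{2}=\bigl(|\xi|^{2(l+1)}|\widehat{f}(\xi)|^{2}\bigr)^{1-\theta}\bigl(|\xi|^{-2s}|\widehat{f}(\xi)|^{2}\bigr)^{\theta},
\]
which forces the balance relation $2(l+1)(1-\theta)-2s\theta=2l$, i.e. $\theta=\tfrac{1}{l+1+s}$; since $l,s\geq0$ we have $\theta\in(0,1]$. Integrating this identity over $\mathbb{R}^{3}$ and applying H\"older's inequality with the conjugate exponents $\tfrac{1}{1-\theta}=\tfrac{l+1+s}{l+s}$ and $\tfrac{1}{\theta}=l+1+s$ gives
\[
\int_{\mathbb{R}^{3}}|\xi|^{2l}|\widehat{f}|^{2}\,d\xi\leq\Bigl(\int_{\mathbb{R}^{3}}|\xi|^{2(l+1)}|\widehat{f}|^{2}\,d\xi\Bigr)^{1-\theta}\Bigl(\int_{\mathbb{R}^{3}}|\xi|^{-2s}|\widehat{f}|^{2}\,d\xi\Bigr)^{\theta}.
\]
Taking square roots and expressing the three integrals through Plancherel's identity yields precisely (\ref{2-3}).

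This argument is essentially mechanical, and I do not expect a genuine obstacle. The only items deserving a moment's attention are the exponent bookkeeping, which pins down $\theta$ as above, and the degenerate endpoint $l=s=0$: there $\theta=1$, the H\"older step is vacuous, and (\ref{2-3}) collapses to the identity $\|f\|_{L^{2}}=\|f\|_{L^{2}}$; in every other case $\theta\in(0,1)$ and the displays above apply verbatim. The auxiliary parameter $k$ appearing in the hypothesis plays no role in the estimate. The proof is in the same spirit as the Gagliardo--Nirenberg interpolation of Lemma~\ref{lem2.1}, specialized to $L^{2}$-based norms, where Plancherel's theorem makes the interpolation exact rather than merely up to a constant.
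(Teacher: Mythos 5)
Your proof is correct: the pointwise factorization, the exponent bookkeeping forcing $\theta=\tfrac1{l+1+s}$, and the H\"older step on the Fourier side are exactly the standard argument behind this interpolation inequality, and the paper itself gives no proof but simply cites \cite{W,Stein}, where the same Plancherel--H\"older computation appears. Your observations that the constant is $1$, that the endpoint $l=s=0$ is a tautology, and that the parameter $k$ in the hypothesis is vestigial are all accurate.
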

\section{Local well-posedness}
In this section, by using Banach fixed point theorem, we   prove the local well-posedness for system (\ref{1-1}). Let
$$
\mathcal{A}:=\{v\in C([0,T];H^1),~~~\|v\|_{L^{\infty} (0,T;H^1)} \leq R\},
$$
for some positive constant $R$ to be determined latter.

Assume that $(\tilde{u}, \tilde{\phi}) \in\mathcal{A}\times   \mathcal{A}$ be given   and $ (\tilde{u},\tilde{\phi}) (\cdot,0)= (u_0,\phi_0) $. Consider
\begin{equation} \label{eq-1}
 \left\{ \begin{aligned}
         &\partial_tu- \Delta u+\tilde{u}\cdot\nabla u+\nabla\pi=-\Delta\tilde{\phi}\cdot\nabla\phi , \\
                   &\hbox{div}u=0,\\
                   &\partial_t\phi+\tilde{u}\cdot\nabla\phi+\Delta^2\phi- \Delta\phi=\Delta[(\phi+\omega_0)((\tilde{\phi}+\omega_0)^2-2)] ,
                   \\
                   &(u,\phi)|_{x=0}=(u_0(x),\phi_0(x)).
                           \end{aligned} \right.\quad (x,t)\in\mathbb{R}^3\times\mathbb{R}^+.
                           \end{equation}
Let $(u,\phi)(x,t)$ be the unique strong solution to (\ref{eq-1}). Define the fixed point map $F: (\tilde{u},\tilde{\phi} )   \in\mathcal{A} \times \mathcal{A} \rightarrow (u,\phi )\in\mathcal{A} \times \mathcal{A} $. We will prove that the map $F$ maps $\mathcal{A} \times  \mathcal{A} $ into $\mathcal{A} \times  \mathcal{A}$ for suitable constant $R$ and small $T>0$ and $F$ is a contraction mapping on $\mathcal{A} \times \mathcal{A} $. Therefore, $F$ has a unique fixed point in $\mathcal{A}\times  \mathcal{A} $. This proves the result.

\begin{lemma}
\label{lema-1}
Let $(\tilde{u},\tilde{\phi} )   \in\mathcal{A} \times \mathcal{A}  $ be given  and $ (\tilde{u},\tilde{\phi}) (\cdot,0)= (u_0,\phi_0) $. Assume that the  constant $ C_0>0$  is independent of $R$. Then, there exists a unique strong solution $u(x,t)$ for system (\ref{eq-1}) such that
\begin{equation}\label{eq-3}
\|(u,\phi)\|_{L^{\infty}(0,T;H^1)} +\|(u,\phi)\|_{L^2(0,T;H^2)}  +\|\phi\|_{L^2(0,T;H^3)}  \leq \tilde{C}_0,
\end{equation}
for some small $T>0$.
\end{lemma}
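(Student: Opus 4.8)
The plan is to obtain the a priori bound \eqref{eq-3} via the standard parabolic energy method applied to the \emph{linear} system \eqref{eq-1} (the coefficients $\tilde u,\tilde\phi$ being frozen in $\mathcal A$), and then invoke linear parabolic theory to produce the solution whose estimate matches. First I would test \eqref{eq-1}$_1$ with $u$ and $-\Delta u$: using $\operatorname{div}u=0$ the transport term $\int(\tilde u\cdot\nabla u)\cdot u$ vanishes and $\int(\tilde u\cdot\nabla u)\cdot(-\Delta u)$ is controlled by $\|\tilde u\|_{H^1}\|u\|_{H^2}\|\nabla u\|_{L^2}\lesssim R\,\|u\|_{H^2}\|\nabla u\|_{L^2}$ after Gagliardo--Nirenberg (Lemma \ref{lem2.1}) and Sobolev (Lemma \ref{x}), while the coupling term $\int(-\Delta\tilde\phi\cdot\nabla\phi)\cdot(-\Delta u)$ is bounded by $\|\Delta\tilde\phi\|_{H^1}\|\nabla\phi\|_{H^1}\|\Delta u\|_{L^2}$; a Young inequality absorbs $\|\Delta u\|_{L^2}^2$ into the dissipation. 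This yields
\begin{equation}\label{plan-u}
\frac{d}{dt}\|u\|_{H^1}^2+\|u\|_{H^2}^2\lesssim (1+R^2)\|u\|_{H^1}^2+\|\phi\|_{H^3}^2 .
\end{equation}

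Next I would do the analogous computation for $\phi$, but using the reformulation \eqref{1-10}: test \eqref{eq-1}$_3$ (rewritten so that $-2\Delta\phi$ sits on the left) with $\phi$ and with $\Delta^2\phi$ (equivalently $\nabla\phi$ and $\Delta\nabla\phi$). The key point, exactly the device flagged in the introduction, is that the convective term $u\cdot\nabla\phi=\tilde u\cdot\nabla\phi$ is only first order, so against $\Delta^2\phi$ it costs $\|\tilde u\|_{H^1}\|\nabla\phi\|_{H^1}\|\Delta^2\phi\|_{L^2}$ — but there is \emph{no} $H^4$ norm available on the left, so one must instead pair with a \emph{third}-order quantity: test with $\phi-\Delta\phi$ and $\Delta\phi-\Delta^2\phi$ to stay within $L^2(0,T;H^3)$. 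The borrowed Laplacian $-2\Delta\phi$ then supplies a full $\|\nabla\Delta\phi\|_{L^2}^2$-type coercive term that dominates $\int(\tilde u\cdot\nabla\phi)\Delta^2\phi$ after Young. The cubic term $\Delta[(\phi+\omega_0)((\tilde\phi+\omega_0)^2-2)]$ is handled by the Kato--Ponce product estimate (Lemma \ref{KP}) together with $H^1\hookrightarrow L^6$: writing it as $\Delta(\phi\,g(\tilde\phi))$ with $g$ smooth and $\|g(\tilde\phi)\|_{H^1}\le C_0(R)$, one gets a bound $C_0(R)\|\phi\|_{H^3}\cdot(\text{third order of }\phi)$, again absorbable. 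This gives
\begin{equation}\label{plan-phi}
\frac{d}{dt}\|\phi\|_{H^1}^2+\|\phi\|_{H^3}^2\lesssim C_0(R)\big(\|\phi\|_{H^1}^2+\|u\|_{H^1}^2\big) .
\end{equation}

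Adding \eqref{plan-u} and \eqref{plan-phi}, setting $Y(t)=\|u\|_{H^1}^2+\|\phi\|_{H^1}^2$, we obtain $Y'(t)+\|u\|_{H^2}^2+\|\phi\|_{H^3}^2\le C_1(R)\,Y(t)$, whence Gr\"onwall gives $Y(t)\le Y(0)e^{C_1(R)t}$ and then, by integrating in time on $[0,T]$, also $\int_0^T(\|u\|_{H^2}^2+\|\phi\|_{H^3}^2)\,dt\le (1+TC_1(R)e^{C_1(R)T})Y(0)$. Choosing $T$ small enough that $e^{C_1(R)T}\le 2$ and $TC_1(R)\le 1$, all three quantities in \eqref{eq-3} are bounded by $\tilde C_0:=\tilde C_0(\|(u_0,\phi_0)\|_{H^1},R)$. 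For the genuine existence and uniqueness of the strong solution $(u,\phi)$ to the linear problem \eqref{eq-1}, I would note that $u$ solves a linear Stokes-type equation with given drift $\tilde u\in C([0,T];H^1)$ and source $-\Delta\tilde\phi\cdot\nabla\phi$ (itself linear in $\phi$), and $\phi$ solves a linear fourth-order parabolic equation; one can either decouple via a short iteration (the $u$-source depends on $\nabla\phi$, the $\phi$-source on $\tilde u$ only, so actually $\phi$ is determined first, then $u$) or apply standard Galerkin/semigroup theory, the a priori bound \eqref{eq-3} providing the needed compactness and the justification that the limit is a strong solution with the asserted regularity. The main obstacle is the second pairing in the $\phi$-estimate: one must be careful never to let the convective term force an $H^4$ estimate, and the only clean way out is the reformulation \eqref{1-10} and testing against at most third-order quantities, so that the extra Laplacian's coercivity is exactly what closes the estimate; getting the functional-analytic bookkeeping of "which norm is controlled by what" right there is the delicate part, everything else being routine Gronwall.
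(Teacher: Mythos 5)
Your overall strategy---two tiers of energy estimates for the frozen-coefficient linear system \eqref{eq-1}, Gr\"onwall, and then $T$ small depending on $R$---is exactly the paper's, and your observation that the system decouples ($\phi$ is determined by $\tilde u,\tilde\phi$ alone, after which $u$ solves a linear Stokes problem with source $-\Delta\tilde\phi\cdot\nabla\phi$) is a cleaner justification of existence than the paper's bare appeal to Temam. However, one of your two multipliers for the $\phi$-equation is wrong and that step would fail as written: pairing the equation with $\Delta^2\phi$ (your ``$\Delta\phi-\Delta^2\phi$'' test function) produces the energy $\tfrac{d}{dt}\|\Delta\phi\|_{L^2}^2$ and the dissipation $\|\Delta^2\phi\|_{L^2}^2$, i.e.\ an $L^\infty(0,T;H^2)\cap L^2(0,T;H^4)$ estimate. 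That is not ``staying within $L^2(0,T;H^3)$'': it requires $\phi_0\in H^2$, which is not assumed (only $\phi_0\in H^1$), and it is not needed for \eqref{eq-3}. Your first multiplier $\phi-\Delta\phi$ already does the whole job: it yields $\tfrac{d}{dt}\|\phi\|_{H^1}^2$ together with dissipation up to $\|\nabla\Delta\phi\|_{L^2}^2$, which is precisely $L^\infty H^1\cap L^2H^3$. This is what the paper does, split into two separate pairings (with $\phi$, then with $\Delta\phi$), estimating the convective term as $\int\tilde u\cdot\nabla\phi\,\Delta\phi\lesssim\|\tilde u\|_{L^3}\|\nabla\phi\|_{L^2}\|\Delta\phi\|_{L^6}\lesssim R\,\|\nabla\phi\|_{L^2}\|\nabla\Delta\phi\|_{L^2}$, so that no fourth-order norm is ever touched.

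Two smaller bookkeeping points. First, in your $u$-inequality the term $\|\phi\|_{H^3}^2$ appears on the right with an unspecified implicit constant, while the $\phi$-inequality supplies only one copy of $\|\phi\|_{H^3}^2$ of dissipation; for the sum to close you must run Young's inequality so that the third-order factor $\|\nabla\Delta\phi\|_{L^2}$ arising from $\int\Delta\tilde\phi\cdot\nabla\phi\cdot\Delta u\,dx$ enters with coefficient strictly less than $1$, pushing all of the $R$-dependence onto the lower-order factor $\|\nabla\phi\|_{L^2}^2$ (the paper's version of this is the bound $\tfrac14\|\nabla\Delta\phi\|_{L^2}^2+R^8\|\nabla\phi\|_{L^2}^2$). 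Second, your control of that same coupling term by $\|\Delta\tilde\phi\|_{H^1}\|\nabla\phi\|_{H^1}\|\Delta u\|_{L^2}$ invokes an $H^3$ norm of $\tilde\phi$, which the class $\mathcal{A}$ (an $H^1$ bound only) certainly does not provide; you should instead put $\Delta\tilde\phi$ in $L^2$ and $\nabla\phi$ in $L^\infty$ (interpolated between $\|\nabla\phi\|_{L^2}$ and $\|\nabla\Delta\phi\|_{L^2}$), as the paper does. With these repairs your argument coincides with the paper's proof.
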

\begin{proof}
Since system (\ref{eq-1}) is   linear with regular $\tilde{u}$ and $\tilde{\phi}$ whose existence and uniqueness   can be found in Temem \cite{Temam}, then we only need to prove the a priori estimates (\ref{eq-3}) in the following.

Multiplying (\ref{eq-1})$_1$  by $u$,  multiplying (\ref{eq-1})$_2$ by $\phi$, integrating by parts over $\mathbb{R}^3$, summing them up, by using the condition of divergence free of $u$, we derive that
\begin{equation}\begin{aligned}
\label{eq-4}&
\frac12\frac d{dt}(\|u\|_{L^2}^2+\|\phi\|_{L^2}^2 )+\|\nabla u\|_{L^2}^2+ \|\Delta\phi\|_{L^2}^2+\kappa\|\nabla\phi\|_{L^2}^2
\\
=& -\int_{\mathbb{R}^3 }\Delta\tilde{\phi}\cdot\nabla\phi  udx
-\int_{\mathbb{R}^3}\tilde{u}\cdot\nabla\phi\cdot\phi dx
+\int_{\mathbb{R}^3}[(\phi+\omega_0)((\tilde{\phi}+\omega_0)^2-2)]\Delta\phi dx\\
=&J_1+J_2+J_3 .
\end{aligned}\end{equation}
Note that
\begin{equation}
\label{eq-5}\begin{aligned}
J_1\lesssim&\|u\|_{L^6}\|\Delta\tilde{\phi}\|_{L^2}\|\nabla\phi\|_{L^3}
\lesssim \frac12\|\nabla u\|_{L^2}^2+\|\Delta\tilde{\phi}\|_{L^2}^2\|\nabla \phi \|_{L^2}\|\Delta \phi \|_{L^2}
\\\lesssim& \frac12\|\nabla u\|_{L^2}^2+\frac{1 }8\|\Delta\phi\|_{L^2}^2+R^4\|\nabla \phi \|_{L^2}^2\\
\lesssim& \frac12\|\nabla u\|_{L^2}^2+\frac{1 }8\|\Delta\phi\|_{L^2}^2+\frac{1 }8\|\Delta\phi\|_{L^2}^2+R^8\| \phi \|_{L^2}^2
.
\end{aligned}
\end{equation}
\begin{equation}\label{eq-6}
J_2\lesssim\|\tilde{u}\|_{L^3}\|\nabla\phi\|_{L^2}\|\phi\|_{L^6}\lesssim R\|\nabla\phi\|^2_{L^2}\lesssim\frac{1 }8\|\Delta\phi\|_{L^2}^2+R^2\| \phi \|_{L^2}^2.
\end{equation}
\begin{equation}
\label{eq-7}\begin{aligned}
J_3\lesssim&\|\Delta\phi\|_{L^2}\|\phi+\omega_0\|_{L^6}\|(\tilde{\phi}+\omega_0)^2-2\|_{L^3}
\\
\lesssim&\|\Delta\phi\|_{L^2}\|\phi+\omega_0\|_{L^6}\| (\tilde{\phi}+\omega_0)-\sqrt{2}\|_{L^6}\|(\tilde{\phi}+\omega_0)+\sqrt{2}\|_{L^6}
\\
\lesssim&\frac{1 }{16}\|\Delta\phi\|^2_{L^2}+\|\nabla\phi\|_{L^2}^2\|\nabla\tilde{\phi}\|_{L^2}^4
\\
\lesssim&\frac{1 }{16}\|\Delta\phi\|^2_{L^2}+R^4\|\nabla\phi\|_{L^2}^2
\\
\lesssim&\frac{1 }{16}\|\Delta\phi\|^2_{L^2}+\frac{1 }{16}\|\Delta\phi\|^2_{L^2}+R^8\|\phi\|_{L^2}^2.
\end{aligned}\end{equation}
Combining (\ref{eq-4})-(\ref{eq-7}) together gives
\begin{equation}\begin{aligned}
\label{eq-10}
 \frac d{dt}(\|u\|_{L^2}^2+\|\phi\|_{L^2}^2 )+\|\nabla u\|^2_{L^2}+\|\Delta\phi\|_{L^2}^2+\kappa\|\nabla\phi\|_{L^2}^2
 \lesssim (R^2+R^8)(\|u\|_{L^2}^2+\|\phi\|_{L^2}^2 )
,
\end{aligned}\end{equation}
which yields that
\begin{equation}
\label{eq-11}
\|u\|_{L^2}^2+\|\phi\|_{L^2}^2 +\int_0^T(\|\nabla u\|_{L^2}^2+  \|\Delta\phi\|_{L^2}^2+ \|\nabla\phi\|_{L^2}^2)ds\leq C_1,
\end{equation}
provided that $(R^2+ R^8 )T<1$.

Multiplying (\ref{eq-1})$_1$  by $\Delta u$,  multiplying (\ref{eq-1})$_2$ by $\Delta\phi$, integrating by parts over $\mathbb{R}^3$, summing them up, by using the condition of divergence free of $u$, we derive that
\begin{equation}\begin{aligned}
\label{eq-12}&
\frac12\frac d{dt}(\|\nabla u\|_{L^2}^2+\|\nabla\phi\|_{L^2}^2 )+\|\Delta u\|_{L^2}^2+ \|\nabla\Delta\phi\|_{L^2}^2+\kappa\|\Delta\phi\|_{L^2}^2
\\
=&-\int_{\mathbb{R}^3}\tilde{u}\cdot \nabla u\cdot\Delta udx -\int_{\mathbb{R}^3}\Delta\tilde{\phi}\cdot\nabla \phi\cdot  \Delta udx
+\int_{\mathbb{R}^3}\tilde{u}\cdot\nabla\phi\cdot\Delta\phi dx\\&
-\int_{\mathbb{R}^3}\Delta[(\phi+\omega_0)((\tilde{\phi}+\omega_0)^2-2)]\Delta\phi dx
\\
=&J_4+J_5+J_6+J_7  .
\end{aligned}\end{equation}
Note that
\begin{equation}\begin{aligned}
\label{eq-13}
J_4\lesssim&\|\Delta u\|_{L^2}\|\tilde{u}\|_{L^6}\|\nabla u\|_{L^3}\lesssim\|\nabla \tilde{u}\|_{L^2}\|\nabla u\|^{\frac12}_{L^2}\|\Delta u\|_{L^2}^{\frac32}
\\
\lesssim&\frac14\|\Delta u\|_{L^2}^2+\|\nabla \tilde{u}\|_{L^2}^4\|\nabla u\|^{2}_{L^2}
\\
\lesssim&\frac14\|\Delta u\|_{L^2}^2+R^4\|\nabla u\|^{2}_{L^2}.
\end{aligned}\end{equation}
\begin{equation}
\begin{aligned}
\label{eq-14}
J_5\lesssim&\|\Delta u\|_{L^2}\|\Delta\tilde{\phi}\|_{L^2}\|\nabla\phi\|_{L^{\infty}}
\\\lesssim &\|\Delta u\|_{L^2}\|\Delta\tilde{\phi}\|_{L^2}\|\nabla\phi\|_{L^2}^{\frac14}\|\nabla\Delta\phi\|_{L^2}^{\frac34}
\\
\lesssim&\frac14\|\Delta u\|_{L^2}^2+\|\Delta\tilde{\phi}\|^2_{L^2}\|\nabla\phi\|^{\frac12}_{L^2}\|\nabla\Delta\phi\|_{L^2}^{\frac32}
\\
\lesssim& \frac14\|\Delta u\|_{L^2}^2+\frac{1 }4\|\nabla\Delta\phi\|^2_{L^2}+\|\Delta\tilde{\phi}\|_{L^2}^8\|\nabla\phi\|_{L^2}^2
\\
\lesssim&\frac14\|\Delta u\|_{L^2}^2+\frac{1 }4\|\nabla\Delta\phi\|^2_{L^2}+R^8\|\nabla\phi\|_{L^2}^2.
\end{aligned}\end{equation}
\begin{equation}\label{eq-8}\begin{aligned}
J_6\lesssim&\|\Delta\phi\|_{L^6}\|\tilde{u}\|_{L^3}\|\nabla\phi\|_{L^2}\lesssim\frac14\|\nabla\Delta \phi\|_{L^2}^2+\|\tilde{u}\|_{L^3}^2\|\nabla\phi\|_{L^2}^2
\\\lesssim&\frac14\|\nabla\Delta \phi\|_{L^2}^2+R^2\|\nabla\phi\|_{L^2}^2.\end{aligned}
\end{equation}
\begin{equation}
\begin{aligned}\label{eq-9}
J_7\lesssim&\|\nabla\Delta\phi\|_{L^2}\|\nabla[(\phi+\omega_0)((\tilde{\phi}+\omega_0)^2-2)]\|_{L^2}
\\
\lesssim&\|\nabla\Delta\phi\|_{L^2}(\|\nabla\phi\|_{L^2}\|(\tilde{\phi}+\omega_0)^2-2\|_{L^{\infty}}+\|\phi\|_{L^6}\|\nabla((\tilde{\phi}+\omega_0)^2-2)\|_{L^3})
\\
\lesssim&\|\nabla\Delta\phi\|_{L^2}\|\nabla\phi\|_{L^2}\|\nabla\tilde{\phi}\|_{L^2}\|\Delta\tilde{\phi}\|_{L^2}
\\
\lesssim&\frac14\|\nabla\Delta\phi\|_{L^2}^2+\|\nabla\phi\|_{L^2}^2\|\nabla\tilde{\phi}\|_{L^2}^2\|\Delta\tilde{\phi}\|_{L^2}^2
\\
\lesssim&\frac14\|\nabla\Delta\phi\|_{L^2}^2+R^4\|\nabla\phi\|_{L^2}^2.
\end{aligned}\end{equation}
Combining (\ref{eq-12})-(\ref{eq-9}) together gives
\begin{equation}\begin{aligned}
\label{eq-15}&
 \frac d{dt}(\|\nabla u\|_{L^2}^2+\|\nabla\phi\|_{L^2}^2 )+\|\Delta u\|_{L^2}^2+ \|\nabla\Delta\phi\|_{L^2}^2+\kappa\|\Delta\phi\|_{L^2}^2\\&\lesssim(R^2+R^4+R^8)(\|\nabla u\|_{L^2}^2+\|\nabla\phi\|_{L^2}^2 ),
\end{aligned}\end{equation}
which yields that
\begin{equation}
\label{eq-16}
\|\nabla u\|_{L^2}^2+\|\nabla\phi\|_{L^2}^2 +\int_0^T(\|\Delta u\|_{L^2}^2+  \|\nabla\Delta\phi\|_{L^2}^2+ \|\Delta\phi\|_{L^2}^2)ds\leq C_1,
\end{equation}
provided that $(R^2+ R^4+R^8 )T<1$. This complete the proof.

\end{proof}

Due to Lemma  \ref{lema-1}, we can take $R=2\sqrt{C_1}$, and thus, $F$ maps $\mathcal{A} \times \mathcal{A} $ into $\mathcal{A} \times \mathcal{A} $. We prove that $F$ is a contraction mapping in the sense of weaker norm in the following.
\begin{lemma}
\label{lem-3}
There exists a constant $\delta\in(0,1)$ such that for any $( \tilde{u}_i,\tilde{\phi}_i) $ $(i=1,2)$,
\begin{equation}
\label{eq-17}\begin{aligned}&
\|F(\tilde{u}_1,
\tilde{\phi}_1 )-F(\tilde{u}_2,
\tilde{\phi}_2 )\|_{L^2(0,T;H^1)}
\leq \delta\left(
\|\tilde{u}_1-\tilde{u}_2\|_{L^2(0,T;H^1)}+
\|\tilde{\phi}_1-\tilde{\phi}_2\|_{L^2(0,T;H^1)}\right)  ,\end{aligned}
\end{equation}
for some small $T>0$.
\end{lemma}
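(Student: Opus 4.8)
The plan is to control the difference of the two images of $F$ by a direct energy estimate for the linear system it solves. Write $(u_i,\phi_i):=F(\tilde u_i,\tilde\phi_i)$ for $i=1,2$ and set
\[
w:=u_1-u_2,\qquad \psi:=\phi_1-\phi_2,\qquad \tilde w:=\tilde u_1-\tilde u_2,\qquad \tilde\psi:=\tilde\phi_1-\tilde\phi_2 .
\]
Both pairs solve (\ref{eq-1}) with the same initial data, so $(w,\psi)|_{t=0}=(0,0)$. Subtracting the two copies of (\ref{eq-1}) and telescoping each nonlinearity,
\[
\tilde u_1\cdot\nabla u_1-\tilde u_2\cdot\nabla u_2=\tilde u_1\cdot\nabla w+\tilde w\cdot\nabla u_2,\qquad
\Delta\tilde\phi_1\cdot\nabla\phi_1-\Delta\tilde\phi_2\cdot\nabla\phi_2=\Delta\tilde\phi_1\cdot\nabla\psi+\Delta\tilde\psi\cdot\nabla\phi_2,
\]
\[
(\phi_1+\omega_0)\bigl((\tilde\phi_1+\omega_0)^2-2\bigr)-(\phi_2+\omega_0)\bigl((\tilde\phi_2+\omega_0)^2-2\bigr)
=(\phi_1+\omega_0)(\tilde\phi_1+\tilde\phi_2+2\omega_0)\,\tilde\psi+\bigl((\tilde\phi_2+\omega_0)^2-2\bigr)\psi ,
\]
so that $(w,\psi)$ solves a \emph{linear} parabolic system whose right-hand side is a finite sum of products of exactly one ``difference'' factor ($w,\psi,\tilde w,\tilde\psi$, or a derivative of one of them) against one ``known'' factor; the latter is controlled either by $R$ through $(\tilde u_i,\tilde\phi_i)\in\mathcal A$, or, using the extra $L^2_t$-regularity in (\ref{eq-3}), by $\tilde C_0$ through $(u_i,\phi_i)$.

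Next I would run the basic energy estimate, exactly as in the proof of Lemma~\ref{lema-1}: test the $w$-equation with $w$ and the $\psi$-equation with $\psi$, integrate over $\mathbb R^3$, and add. The divergence-free condition annihilates $\int\tilde u_1\cdot\nabla w\cdot w\,dx$ and $\int\tilde u_1\cdot\nabla\psi\cdot\psi\,dx$; after integrating by parts in the $\Delta[\,\cdot\,]$-term and noting that the borrowed second-order term $-\Delta\psi$ in (\ref{eq-1})$_3$ cancels the linear part of the cubic remainder $\bigl((\tilde\phi_2+\omega_0)^2-2\bigr)\psi=(\tilde\phi_2^2+2\omega_0\tilde\phi_2-1)\psi$, the left side produces
\[
\tfrac12\tfrac{d}{dt}\bigl(\|w\|_{L^2}^2+\|\psi\|_{L^2}^2\bigr)+\|\nabla w\|_{L^2}^2+\|\Delta\psi\|_{L^2}^2 ,
\]
with $\|\nabla\psi\|_{L^2}$ recovered afterwards from $\|\nabla\psi\|_{L^2}^2\le\|\psi\|_{L^2}\|\Delta\psi\|_{L^2}$. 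Each of the remaining terms is then estimated by Hölder together with the Sobolev and Gagliardo--Nirenberg inequalities of Section~2 (Lemmas~\ref{x} and~\ref{lem2.1}, e.g. $\|f\|_{L^3}\lesssim\|f\|_{L^2}^{1/2}\|\nabla f\|_{L^2}^{1/2}$, $\|f\|_{L^6}\lesssim\|\nabla f\|_{L^2}$, $\|f\|_{L^\infty}\lesssim\|\nabla f\|_{L^2}^{1/2}\|\Delta f\|_{L^2}^{1/2}$) and Young's inequality, so that every contribution goes either into the dissipation $\|\nabla w\|_{L^2}^2+\|\Delta\psi\|_{L^2}^2$ on the left, or into a term $g(t)\bigl(\|w\|_{L^2}^2+\|\psi\|_{L^2}^2\bigr)$ with $g\in L^1(0,T)$ (for Gr\"onwall), or into a term $h(t)\bigl(\|\tilde w\|_{H^1}^2+\|\tilde\psi\|_{H^1}^2\bigr)$; the decisive device for this last type is to route the ``known'' factor through its $L^2_t$-higher-regularity bound from (\ref{eq-3}) (rather than its $L^\infty_t$ bound), so that a H\"older in time converts its contribution into an explicit positive power of $T$.

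Integrating in time over $[0,T]$, using $(w,\psi)|_{t=0}=0$ and Gr\"onwall's inequality, one reaches
\[
\sup_{0\le t\le T}\bigl(\|w\|_{L^2}^2+\|\psi\|_{L^2}^2\bigr)+\int_0^T\bigl(\|\nabla w\|_{L^2}^2+\|\Delta\psi\|_{L^2}^2+\|\nabla\psi\|_{L^2}^2\bigr)ds
\le C(R,\tilde C_0)\,T^{\alpha}\Bigl(\|\tilde w\|_{L^2(0,T;H^1)}^2+\|\tilde\psi\|_{L^2(0,T;H^1)}^2\Bigr)
\]
for some $\alpha>0$ and all $T\le1$. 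Since
\[
\|w\|_{L^2(0,T;H^1)}^2+\|\psi\|_{L^2(0,T;H^1)}^2\le T\sup_{0\le t\le T}\bigl(\|w\|_{L^2}^2+\|\psi\|_{L^2}^2\bigr)+\int_0^T\bigl(\|\nabla w\|_{L^2}^2+\|\nabla\psi\|_{L^2}^2\bigr)ds,
\]
it then suffices to fix $T>0$ so small that $C(R,\tilde C_0)(1+T)\,T^{\alpha}\le\delta^2$ with $\delta\in(0,1)$; recalling $F(\tilde u_i,\tilde\phi_i)=(u_i,\phi_i)$, this is exactly (\ref{eq-17}). Hence $F$ is a contraction on $\mathcal A\times\mathcal A$ for that $T$, and the Banach fixed point theorem yields a unique fixed point, which is the unique strong solution of (\ref{1-1}) on $[0,\tilde T]$, completing the proof of Theorem~\ref{thm1.0}.

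The step I expect to be the main obstacle is the velocity--concentration coupling, namely the terms $\int\Delta\tilde\phi_1\cdot\nabla\psi\cdot w\,dx$ and, above all, $\int\Delta\tilde\psi\cdot\nabla\phi_2\cdot w\,dx$ of the $w$-equation (and their analogues in the $\psi$-equation): two derivatives fall on the input factor $\tilde\psi$, whose difference we may measure only at the $H^1$-level, so one must integrate by parts to shift a derivative onto $\nabla\phi_2$ (then onto $\nabla^2\phi_2$, which is harmless only because $\phi_i\in L^2(0,T;H^3)$ by (\ref{eq-3})) and onto $w$ (absorbed into $\|\nabla w\|_{L^2}^2$), keeping only $\|\nabla\tilde\psi\|_{L^2}$, and then allocate the remaining $L^p$-norms so that the bound is \emph{linear} in $\|\tilde\psi\|_{L^2(0,T;H^1)}$, still carries a power of $T$, and leaves the $w$-quantities in an absorbable form. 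Since the basic $L^2$-energy estimate supplies only the $\|\nabla w\|_{L^2}$- and $\|\Delta\psi\|_{L^2}$-dissipations, closing this term is delicate and may require interpolating $\phi_2$ between its $L^\infty(0,T;H^1)$- and $L^2(0,T;H^3)$-bounds (or supplementing the argument with the corresponding $H^1$-level difference estimate); a related bookkeeping point is that the quadratic remainder $(\tilde\phi_2^2+2\omega_0\tilde\phi_2)\psi$ surviving the linearization of the double-well term has only $\|\Delta\psi\|_{L^2}^2$ to be absorbed against and must likewise be combined with the $L^2(0,T;H^1)$-bound on $\tilde\phi_2$ to yield the needed factor of $T$.
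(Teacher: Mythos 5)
Your proposal follows essentially the same route as the paper: subtract the two copies of (\ref{eq-1}), test the difference equations with $u_1-u_2$ and $\phi_1-\phi_2$, integrate, apply Gr\"onwall, and take $T$ small --- which is, verbatim, all the paper's own proof of Lemma \ref{lem-3} says. Your version is in fact considerably more detailed than the paper's two-line sketch, and your identification of $\int\Delta\tilde\psi\cdot\nabla\phi_2\cdot w\,dx$ (two derivatives landing on the input difference, which is only measured in $H^1$) as the genuinely delicate term, to be handled by integration by parts against the $L^2(0,T;H^3)$ bound on $\phi_2$ from (\ref{eq-3}), addresses a point the paper passes over entirely.
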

\begin{proof}
Suppose that $ (u_i,\phi_i)(x,t)$ $(i=1,2)$ are the solutions to problem (\ref{eq-1})  corresponding to $ ( \tilde{u}_i,\tilde{b}_i) $.
Denote
$$
u=u_1-u_2,\quad~~\tilde{u}=\tilde{u}_1-\tilde{u}_2,\quad~~\phi=\phi_1-\phi_2,\quad~~\tilde{\phi}=\tilde{\phi}_1-\tilde{\phi}_2,
$$
we have
\begin{equation}
\label{eq-18}
u_t+\tilde{u}_1\cdot\nabla u+\tilde{u}\cdot\nabla u_2+\nabla\phi-\Delta u=-\Delta\phi_1\nabla\phi-\Delta\phi\nabla\phi_2,
\end{equation}
and
\begin{equation}
\label{eq-19}
\phi_t+\tilde{u}_1\cdot\nabla\phi+\tilde{u}\cdot\nabla\phi_2+\Delta^2\phi-\kappa\Delta\phi=\Delta[(\phi_1+\omega_0)((\tilde{\phi}_1+\omega_0)^2-2)-
(\phi_2+\omega_0)((\tilde{\phi}_2+\omega_0)^2-2)].
\end{equation}
Multiplying (\ref{eq-18}) by $u$, multiplying (\ref{eq-19}) by $\phi$, integrating over $\mathbb{R}^3$ and using the Gronwall's inequality, taking $T$ small enough, we arrive at (\ref{eq-9}).

\end{proof}

Next, we give the proof of Theorem \ref{thm1.0}.
\begin{proof}[Proof of Theorem \ref{thm1.0}]
By Lemmas \ref{lema-1}, \ref{lem-3}    and a variant of the Banach fixed point theorem, using weak compactness, we complete the proof.
\end{proof}

\section{Small initial data global well-posedness}
In this section, suppose that the condition (\ref{1-2}) holds, we prove the small initial data global well-posedness of solutions for system (\ref{1-1}).
\begin{proof}[Proof of Theorem \ref{thm1.1}]

Taking $\Lambda^{\frac12}$ to (\ref{1-1})$_1$ and (\ref{1-1})$_3$, Taking  $\Lambda^{\frac32}$  to (\ref{1-1})$_3$, multiplying by $\Lambda^{\frac12}u$, $\Lambda^{\frac12}\phi$ and $\Lambda^{\frac32}\phi$ respectively, integrating over $\mathbb{R}^3$, summing them up, we arrive at
\begin{equation}
\label{9-2}\begin{aligned}&
\frac12\frac d{dt}(\|\Lambda^{\frac12}u\|_{L^2}^2+\|\Lambda^{\frac12}\phi\|_{L^2}^2+\|\Lambda^{\frac32}\phi\|_{L^2}^2)
\\&+\|\Lambda^{\frac32}u\|_{L^2}^2+\|\Lambda^{\frac32}\phi\|_{L^2}^2+2\|\Lambda^{\frac52}\phi\|^2_{L^2}+\|\Lambda^{\frac52}\phi\|^2_{L^2}
\\
=&-\int_{\mathbb{R}^3}\Lambda^{\frac12}(u\cdot\nabla u)\cdot\Lambda^{\frac12}udx-\int_{\mathbb{R}^3}\Lambda^{\frac12}(\Delta\phi\cdot\nabla\phi)\cdot\Lambda^{\frac12}udx\\&+\int_{\mathbb{R}^3}\Lambda^{\frac1
2}(u\cdot\nabla\phi)\cdot\Lambda^{\frac12}\phi dx
 +\int_{\mathbb{R}^3}\Lambda^{\frac12}[(\phi+\omega_0)((\phi+\omega_0)^2-2)]\cdot\Lambda^{\frac12}\Delta\phi dx\\
&+\int_{\mathbb{R}^3}\Lambda^{\frac32}(u\cdot\nabla\phi)\cdot\Lambda^{\frac32}\phi dx
+\int_{\mathbb{R}^3}\Lambda^{\frac32}[(\phi+\omega_0)((\phi+\omega_0)^2-2)]\cdot\Lambda^{\frac32}\Delta\phi dx
\\
:=&I_1+I_2+I_3+I_4+I_5+I_6.
\end{aligned}
\end{equation}
Applying  Lemmas \ref{lem2.1} and \ref{KP}, we have
\begin{equation}
\label{9-2c}
\begin{aligned}
I_1\lesssim&\|\Lambda^{\frac12}u\|_{L^6}\|\Lambda^{\frac12}(u\cdot\nabla u)\|_{L^{\frac65}}
\\
\lesssim&\|\Lambda^\frac12u\|_{L^6}(\|\Lambda^{\frac12}u\|_{L^2}\|\nabla u\|_{L^3}+\|u\|_{L^3}\|\Lambda^{\frac32}u\|_{L^2})
\\
\leq&C\|\Lambda^{\frac12}u\|_{L^2}\|\Lambda^{\frac32}u\|_{L^2}^2.
\end{aligned}\end{equation}
Moreover, by using Sobolev's embedding theorem, Lemmas \ref{lem2.1} and \ref{KP}, we obtain
\begin{equation}\label{9-3}
\begin{aligned}
I_2\lesssim&\|\Lambda^{\frac12}u\|_{L^6}\|\Lambda^{\frac12}(\Delta\phi\cdot\nabla\phi)\|_{L^{\frac65}}
\\
\lesssim&\|\Lambda^{\frac12}u\|_{L^6}(\|\Lambda^{\frac52}\phi\|_{L^2}\|\nabla\phi\|_{L^3}+\|\Delta\phi\|_{L^3}\|\Lambda^{\frac32}\phi\|_{L^2})
\\\lesssim&\|\Lambda^{\frac32}u\|_{L^2}(\|\Lambda^{\frac52}\phi\|_{L^2}\|\Lambda^{\frac32}\phi\|_{L^2}+\|\Lambda^{\frac52}\phi\|_{L^2}\|\Lambda^{\frac32}\phi\|_{L^2})
\\
\lesssim&\|\Lambda^{\frac32}\phi\|_{L^2}(\|\Lambda^{\frac32}u\|^2_{L^2}+\|\Lambda^{\frac52}\phi\|^2_{L^2}).
\end{aligned}
\end{equation}
The term $I_3$ satisfies
\begin{equation}\label{9-4}\begin{aligned}
I_3\lesssim&\|\Lambda^{\frac12}\phi\|_{L^6}\|\Lambda^{\frac12}(u\cdot\nabla\phi)\|_{L^{\frac65}}
\\
\lesssim&\|\Lambda^{\frac12}\phi\|_{L^6}( \|\Lambda^{\frac12}u\|_{L^2}\|\nabla\phi\|_{L^3}+ \|u\|_{L^3}\|\Lambda^{\frac32}\phi\|_{L^2})
\\
\lesssim&\|\Lambda^{\frac32}\phi\|_{L^2}( \|\Lambda^{\frac12}u\|_{L^2}\|\Lambda^{\frac32}\phi\|_{L^2}+ \|\Lambda^{\frac12}u\|_{L^2}\|\Lambda^{\frac32}\phi\|_{L^2})
\\
\lesssim& \|\Lambda^{\frac12}u\|_{L^2} \|\Lambda^{\frac32}\phi\|_{L^2}^2.
\end{aligned}
\end{equation}
Similarly, we estimate $I_5$ as
\begin{equation}\label{9-4c}\begin{aligned}
I_5\lesssim&\|\Lambda^{\frac32}\phi\|_{L^6}\|\Lambda^{\frac32}(u\cdot\nabla\phi)\|_{L^{\frac65}}
\\
\lesssim&\|\Lambda^{\frac32}\phi\|_{L^6}( \|\Lambda^{\frac32}u\|_{L^2}\|\nabla\phi\|_{L^3}+ \|u\|_{L^3}\|\Lambda^{\frac52}\phi\|_{L^2})
\\
\lesssim&(\|\nabla\phi\|_{L^3}+\|u\|_{L^3})(\|\Lambda^{\frac52}\phi\|^2_{L^2}+\|\Lambda^{\frac32}u\|^2_{L^2})
\\
\lesssim&(\|\Lambda^{\frac32}\phi\|_{L^2}+\|\Lambda^{\frac12}u\|_{L^2})(\|\Lambda^{\frac52}\phi\|^2_{L^2}+\|\Lambda^{\frac32}u\|^2_{L^2}).
\end{aligned}
\end{equation}
We also have
\begin{equation}
\begin{aligned}\label{9-5c}
I_4\lesssim&\|\Lambda^{\frac52}\phi\|_{L^2}\|\Lambda^{\frac12}[(\phi+\omega_0)((\phi+\omega_0)^2-2)]\|_{L^2}
\\
\lesssim&\|\Lambda^{\frac52}\phi\|_{L^2}(\|\Lambda^{\frac12}(\phi+\omega_0)\|_{L^6}\|\phi+\omega_0-\sqrt{2}\|_{L^6}\|\phi+\omega_0+\sqrt{2}\|_{L^6}
\\&+\|\Lambda^{\frac12}(\phi+\omega_0-\sqrt{2})\|_{L^6}\|\phi+\omega_0 \|_{L^6}\|\phi+\omega_0+\sqrt{2}\|_{L^6}\\&
+\|\Lambda^{\frac12}(\phi+\omega_0+\sqrt{2})\|_{L^6}\|\phi+\omega_0-\sqrt{2}\|_{L^6}\|\phi+\omega_0 \|_{L^6})
\\
\lesssim&\|\Lambda^{\frac52}\phi\|_{L^2}\|\Lambda^{\frac32}\phi\|_{L^2}\|\nabla\phi\|_{L^2}^2
\\\lesssim&\|\Lambda^{\frac52}\phi\|_{L^2}\|\Lambda^{\frac32}\phi\|_{L^2}  \|\Lambda^{\frac12}\phi\|_{L^2} \|\Lambda^{\frac32}\phi\|_{L^2}
\\
\leq&(\|\Lambda^{\frac12}\phi\|_{L^2}^2+ \|\Lambda^{\frac32}\phi\|_{L^2}  ^2 )( \|\Lambda^{\frac52}\phi\|_{L^2}^2+ \|\Lambda^{\frac32}\phi\|_{L^2}^2),
\end{aligned}
\end{equation}
and
\begin{equation}
\begin{aligned}\label{9-5}
I_6\lesssim&\|\Lambda^{\frac72}\phi\|_{L^2}\|\Lambda^{\frac12}[(\phi+\omega_0)^3-2(\phi+\omega_0)\phi]\|_{L^2}
\\
\lesssim&\|\Lambda^{\frac72}\phi\|_{L^2}(\|\Lambda^{\frac32}(\phi+\omega_0)\|_{L^6}\|\phi+\omega_0-\sqrt{2}\|_{L^6}\|\phi+\omega_0+\sqrt{2}\|_{L^6}
\\&+\|\Lambda^{\frac32}(\phi+\omega_0-\sqrt{2})\|_{L^6}\|\phi+\omega_0 \|_{L^6}\|\phi+\omega_0+\sqrt{2}\|_{L^6}\\&
+\|\Lambda^{\frac32}(\phi+\omega_0+\sqrt{2})\|_{L^6}\|\phi+\omega_0-\sqrt{2}\|_{L^6}\|\phi+\omega_0 \|_{L^6})
\\
\lesssim&\|\Lambda^{\frac72}\phi\|_{L^2}\|\Lambda^{\frac52}\phi\|_{L^2}\|\nabla\phi\|_{L^2}^2
\\
\leq&(\|\Lambda^{\frac12}\phi\|_{L^2}^2+ \|\Lambda^{\frac32}\phi\|_{L^2}  ^2 )(\|\Lambda^{\frac72}\phi\|_{L^2}^2+\|\Lambda^{\frac52}\phi\|_{L^2}^2).
\end{aligned}
\end{equation}
Adding (\ref{9-2})-(\ref{9-5}) together gives
\begin{equation}
\label{9-6}\begin{aligned}&
 \frac d{dt}X+Y\leq (X+X^2)Y .
\end{aligned}
\end{equation}
where
$$
X=\|\Lambda^{\frac12}u\|_{L^2}^2+\|\Lambda^{\frac12}\phi\|_{L^2}^2+\|\Lambda^{\frac32}\phi\|_{L^2}^2,
$$
and
$$
Y=\|\Lambda^{\frac32}u\|_{L^2}^2+\|\Lambda^{\frac32}\phi\|_{L^2}^2+2\|\Lambda^{\frac52}\phi\|^2_{L^2}+\|\Lambda^{\frac52}\phi\|^2_{L^2}.
$$
 So, if $X_0$ is sufficiently small, then for any $T\in(0,\infty)$, we have
\begin{equation}
\begin{aligned}\label{ad-3}&
\frac d{dt}X+Y\leq0,
\end{aligned}\end{equation}
which implies
$$
u,\phi,\nabla\phi\in L^{\infty}(0,T;H^{\frac12}),\quad \nabla u,\nabla\phi,\Delta\phi,\nabla\Delta\phi\in L^2(0,T;H^{\frac32}) .
$$

Now, we consider the higher order derivative norm estimates for the solution of system (\ref{1-1}).

Applying $\Lambda^k$ to (\ref{1-1})$_1$ and (\ref{1-1})$_3$, $\Lambda^{k+1}$ to (\ref{1-1})$_3$,  multiplying the resulting identity by $\Lambda^ku$, $\Lambda^k\phi$ and $\Lambda^{k+1}\phi$, respectively, and then integrating over $\mathbb{R}^3$ by parts, summing them up, we arrive at
\begin{equation}
\begin{aligned}
\label{9-7}&
\frac12\frac d{dt}(\|\Lambda^ku\|_{L^2}^2+\|\Lambda^{k}\phi\|_{L^2}^2+\|\Lambda^{k+1}\phi\|_{L^2}^2)
\\
&+\|\Lambda^{k+1}u\|_{L^2}^2+\|\Lambda^{k+1}\phi\|_{L^2}^2+2 \|\Lambda^{k+2}\phi\|_{L^2}^2+\|\Lambda^{k+3}\phi\|_{L^2}^2
\\
=&-\int_{\mathbb{R}^3}\Lambda^k(u\cdot\nabla u)\cdot\Lambda^kudx-\int_{\mathbb{R}^3}\Lambda^k(\Delta\phi\cdot\nabla\phi) \cdot\Lambda^kudx
\\
&+\int_{\mathbb{R}^3}\Lambda^{k   }(u\cdot\nabla\phi)\cdot\Lambda^{k  }\phi dx+\int_{\mathbb{R}^3}\Lambda^{k  }\Delta((\phi+\omega_0)^3-2(\phi+\omega_0))\cdot\Lambda^{k  }\phi dx\\
&+\int_{\mathbb{R}^3}\Lambda^{k +1 }(u\cdot\nabla\phi)\cdot\Lambda^{k+1 }\phi dx+\int_{\mathbb{R}^3}\Lambda^{k+1 }\Delta((\phi+\omega_0)^3-2(\phi+\omega_0))\cdot\Lambda^{k+1 }\phi dx
\\
=&I_7+I_8+I_9+I_{10}+I_{11}+I_{12}.
\end{aligned}
\end{equation}
We will estimate $I_7$-$I_{10}$ one by one in the following.
\begin{equation}\label{9-8}
\begin{aligned}
I_7\lesssim&\|\Lambda^ku\|_{L^6}\|\Lambda^{k}\nabla\cdot(u\otimes u)\|_{L^{\frac65}}
\lesssim \|\Lambda^{k+1}u\|_{L^2}\|\Lambda^{k+1}u\|_{L^2}\|  u\|_{L^3}
\\
\lesssim&\|u\|_{\dot{H}^{\frac12}}\|\Lambda^{k+1}u\|_{L^2}^2
\lesssim(\|u_0\|_{\dot{H}^{\frac12}}+\|\nabla\phi_0\|_{\dot{H}^{\frac12}})\|\Lambda^{k+1}u\|_{L^2}^2
\\
\lesssim&\delta_0\|\Lambda^{k+1}u\|_{L^2}^2,\end{aligned}
\end{equation}
\begin{equation}\label{9-9}
\begin{aligned}
I_8\lesssim&\|\Lambda^ku\|_{L^6}\|\Lambda^{k}(\Delta\phi\cdot\nabla\phi)\|_{L^{\frac65}}
\\
\lesssim&\|\Lambda^{k+1}u\|_{L^2}(\|\Lambda^{k+2}\phi\|_{L^2}\| \nabla\phi\|_{L^3}+\|\nabla^{k+1}\phi\|_{L^3}\|\Delta\phi\|_{L^2})\\
\lesssim&\|\Lambda^{k+1}u\|_{L^2}\left[\|\Lambda^{k+2}\phi\|_{L^2}\| \Lambda^{\frac32}\phi\|_{L^3}\right.\\&\left.+\left(\|\nabla^{k+2}\phi\|_{L^2}^{\frac k{k+\frac12}}\|\Lambda^{\frac32}\phi\|_{L^2}^{\frac{\frac12}{k+\frac12}}\right)\left(\|\nabla^{k+2}\phi\|_{L^2}^{\frac {\frac12}{k+\frac12}}\|\Lambda^{\frac32}\phi\|_{L^2}^{\frac{k}{k+\frac12}}\right)\right]
\\
\lesssim&\|\nabla\phi\|_{\dot{H}^{\frac12}}(\|\Lambda^{k+1}u\|_{L^2}^2+\|\Lambda^{k+2}\phi\|_{L^2}^2)
\\\lesssim&(\|u_0\|_{\dot{H}^{\frac12}}+\|\nabla\phi_0\|_{\dot{H}^{\frac12}})(\|\Lambda^{k+1}u\|_{L^2}^2+\|\Lambda^{k+2}\phi\|_{L^2}^2)
\\
\lesssim&\delta_0(\|\Lambda^{k+1}u\|_{L^2}^2+\|\Lambda^{k+2}\phi\|_{L^2}^2),\end{aligned}
\end{equation}
\begin{equation}
\label{9-10c}
\begin{aligned}
I_9\lesssim&\|\Lambda^{k}\phi\|_{L^6}\|\Lambda^{k}(u\cdot\nabla\phi)\|_{L^{\frac65}}
\\
\lesssim&\|\Lambda^{k}\phi\|_{L^6}(\|\Lambda^{k}u\|_{L^2}\|\nabla\phi\|_{L^3}+\|u\|_{L^3}\|\Lambda^{k+1}\phi\|_{L^2})
\\
\lesssim&\|\Lambda^{k+1}\phi\|_{L^2}\left[\left(\|\Lambda^{k+1}u\|_{L^2}^{\frac{k-\frac12}{k+\frac12}}\|\Lambda^{\frac12}u\|_{L^2}^{\frac1{k+\frac12}}\right)
\left(\|\Lambda^{\frac12}\phi\|_{L^2}^{\frac{k-\frac12}{k+\frac12}}\|\Lambda^{k+1}\phi\|_{L^2}^{\frac1{k+\frac12}}\right)\right.\\&\left.+\|\Lambda^{\frac12}
u\|_{L^2}\|\Lambda^{k+1}\phi\|_{L^2}\right]
\\
\lesssim&(\|\Lambda^{\frac12}u\|_{L^2}+\|\Lambda^{\frac12}\phi\|_{L^2})(\|\Lambda^{k+1}u\|_{L^2}^2+\|\Lambda^{k+1}\phi\|_{L^2}^2)
\\
\lesssim&\delta_0(\|\Lambda^{k+1}u\|_{L^2}^2+\|\Lambda^{k+1}\phi\|_{L^2}^2),\end{aligned}
\end{equation}
\begin{equation}
\label{9-10}
\begin{aligned}
I_{11}\lesssim&\|\Lambda^{k+1}\phi\|_{L^6}\|\Lambda^{k+1}(u\cdot\nabla\phi)\|_{L^{\frac65}}
\\
\lesssim&\|\Lambda^{k+1}\phi\|_{L^6}(\|\Lambda^{k+1}u\|_{L^2}\|\nabla\phi\|_{L^3}+\|u\|_{L^3}\|\Lambda^{k+2}\phi\|_{L^2})
\\
\lesssim&(\|\nabla\phi\|_{\dot{H}^{\frac12}}+\|u\|_{\dot{H}^{\frac12}})(\|\Lambda^{k+1}u\|_{L^2}^2+\|\Lambda^{k+2}\phi\|_{L^2}^2)
\\
\\
\lesssim&\delta_0(\|\Lambda^{k+1}u\|_{L^2}^2+\|\Lambda^{k+2}\phi\|_{L^2}^2),\end{aligned}
\end{equation}
\begin{equation}
\label{9-11c}
\begin{aligned}
I_{10}\lesssim&\|\Lambda^{k+1}\phi\|_{L^2}\|\Lambda^{k+1}((\phi+\omega_0)^3-2(\phi+\omega_0))\|_{L^2}
\\
\lesssim&\|\Lambda^{k+1}\phi\|_{L^2}(\|\Lambda^{k+1}(\phi+\omega_0)\|_{L^6}\|\phi+\omega_0-\sqrt{2}\|_{L^6}\|\phi+\omega_0+\sqrt{2}\|_{L^6}
\\&+\|\Lambda^{k+1}(\phi+\omega_0-\sqrt{2})\|_{L^6}\|\phi+\omega_0 \|_{L^6}\|\phi+\omega_0+\sqrt{2}\|_{L^6}\\&
+\|\Lambda^{k+1}(\phi+\omega_0+\sqrt{2})\|_{L^6}\|\phi+\omega_0-\sqrt{2}\|_{L^6}\|\phi+\omega_0 \|_{L^6})
\\
\lesssim&\|\Lambda^{k+1}\phi\|_{L^2}\|\Lambda^{k+2}\phi\|_{L^2}\|\nabla\phi\|_{L^2}^2\\
\lesssim&\|\Lambda^{k+1}\phi\|_{L^2}\|\Lambda^{k+2}\phi\|_{L^2}(\|\Lambda^{\frac12}\phi\|_{L^2}^2+\|\Lambda^{\frac32}\phi\|^2_{L^2})
\\
\lesssim&(\|\Lambda^{\frac12}\phi\|_{L^2}^2+\|\Lambda^{\frac32}\phi\|^2_{L^2})(\|\Lambda^{k+1}\phi\|_{L^2}^2+\|\Lambda^{k+2}\phi\|_{L^2}^2)
\\
\lesssim&\delta_0^2(\|\Lambda^{k+1}\phi\|_{L^2}^2+\|\Lambda^{k+2}\phi\|_{L^2}^2),
\end{aligned}
\end{equation}\begin{equation}
\label{9-11}
\begin{aligned}
I_{12}\lesssim&\|\Lambda^{k+3}\phi\|_{L^2}\|\Lambda^{k+1}((\phi+\omega_0)^3-2(\phi+\omega_0))\|_{L^2}
\\
\lesssim&\|\Lambda^{k+3}\phi\|_{L^2}(\|\Lambda^{k+1}(\phi+\omega_0)\|_{L^6}\|\phi+\omega_0-\sqrt{2}\|_{L^6}\|\phi+\omega_0+\sqrt{2}\|_{L^6}
\\&+\|\Lambda^{k+1}(\phi+\omega_0-\sqrt{2})\|_{L^6}\|\phi+\omega_0 \|_{L^6}\|\phi+\omega_0+\sqrt{2}\|_{L^6}\\&
+\|\Lambda^{k+1}(\phi+\omega_0+\sqrt{2})\|_{L^6}\|\phi+\omega_0-\sqrt{2}\|_{L^6}\|\phi+\omega_0 \|_{L^6})
\\
\lesssim&\|\Lambda^{k+3}\phi\|_{L^2}\|\Lambda^{k+2}\phi\|_{L^2}\|\nabla\phi\|_{L^2}^2\\
\lesssim&\|\Lambda^{k+3}\phi\|_{L^2}\|\Lambda^{k+2}\phi\|_{L^2}(\|\Lambda^{\frac12}\phi\|_{L^2}^2+\|\Lambda^{\frac32}\phi\|^2_{L^2})
\\
\lesssim&(\|\Lambda^{\frac12}\phi\|_{L^2}^2+\|\Lambda^{\frac32}\phi\|^2_{L^2})(\|\Lambda^{k+3}\phi\|_{L^2}^2+\|\Lambda^{k+2}\phi\|_{L^2}^2)
\\
\lesssim&\delta_0^2(\|\Lambda^{k+3}\phi\|_{L^2}^2+\|\Lambda^{k+2}\phi\|_{L^2}^2),
\end{aligned}
\end{equation}
  It then follows from (\ref{9-7})-(\ref{9-11}) that
\begin{equation}
\label{9-12}\begin{aligned}&
\frac12\frac d{dt}(\|\Lambda^{k}u\|_{L^2}^2+\|\Lambda^{k}\phi\|_{L^2}^2+\|\Lambda^{k+1}\phi\|_{L^2}^2)\\&
 +  \|\Lambda^{k+1}u\|_{L^2}^2+ \|\Lambda^{k+1}\phi\|_{L^2}^2+2\|\Lambda^{k+2}\phi\|^2_{L^2}+ \|\Lambda^{k+3}\phi\|_{L^2}^2
\\
\leq&(4\delta_0+2\delta_0^2) (\|\Lambda^{k}u\|_{L^2}^2+\|\Lambda^{k}\phi\|_{L^2}^2+\|\Lambda^{k+1}\phi\|_{L^2}^2).
\end{aligned}\end{equation}
Choose $\delta_0$ sufficiently small, such that $4\delta_0+2\delta_0^2\leq\frac12$, we obtain
\begin{equation}\begin{aligned}
\label{9-13}&
\|\Lambda^{k}u\|_{L^2}^2+\|\Lambda^{k}\phi\|_{L^2}^2+\|\Lambda^{k+1}\phi\|_{L^2}^2\\&+\int_0^t\left[ \|\Lambda^{k+1}u\|_{L^2}^2+ \|\Lambda^{k+1}\phi\|_{L^2}^2+2\|\Lambda^{k+2}\phi\|^2_{L^2}+ \|\Lambda^{k+3}\phi\|_{L^2}^2\right]ds\\\leq & \|\Lambda^{k}u_0\|_{L^2}^2+\|\Lambda^{k+1}\phi_0\|_{L^2}^2,\quad\hbox{for}~k=1,2,\cdots,N.
\end{aligned}\end{equation}
Hence, we complete the proof of Theorem \ref{thm1.1}.

\end{proof}
\section{Decay rate}
In this section, we consider the decay rate of strong solutions for system (\ref{1-1}). First of all, we derive the evolution of the negative Sobolev norms of the solution to the Cauchy problem (\ref{1-1}).
In order to estimate the convective term and the double-well potential, we shall restrict ourselves to that $s\in[0,\frac12]$.

For the homogeneous Sobolev space, the following lemma holds:
\begin{lemma}
\label{lem5.1}
Suppose that all the assumptions in Theorem \ref{thm1.1} are in force. For $s\in[0,\frac12]$, we have
\begin{equation}
\label{5-1}\begin{aligned}&
\frac d{dt}(\|u(t)\|_{\dot{H}^{-s}}^2 +\| \phi(t)\|_{\dot{H}^{-s}}^2+\|\nabla\phi(t)\|_{\dot{H}^{-s}}^2)\\&
 +\|\nabla u(t)\|^2_{\dot{H}^{-s}}+\|\nabla \phi(t)\|^2_{\dot{H}^{-s}}+ 2\|\Lambda^2 \phi(t)\|^2_{\dot{H}^{-s}}+\|\Lambda^3 \phi(t)\|^2_{\dot{H}^{-s}}  \\
&\leq E(t)(\|u(t)\|_{\dot{H}^{-s}}+\|\phi(t)\|_{\dot{H}^{-s}}+\|\nabla\phi(t)\|_{\dot{H}^{-s}}),
\end{aligned}
\end{equation}where
\begin{equation}
\nonumber\begin{aligned}
E(t)= \|\nabla u\|_{L^2}^2+\|\Lambda^2u\|_{L^2}^2 +\|\nabla ^2\phi\|_{L^2}^2 +\|\Lambda^3\phi\|_{L^2}^2 .
\end{aligned}
\end{equation}
\end{lemma}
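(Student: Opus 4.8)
The plan is to apply $\Lambda^{-s}$ to each of the three evolution equations — equation (\ref{1-1})$_1$ for $u$, equation (\ref{1-1})$_3$ for $\phi$, and then equation (\ref{1-1})$_3$ again but now pairing against $\Lambda^{-s}\nabla\phi$ after taking one more derivative — multiply respectively by $\Lambda^{-s}u$, $\Lambda^{-s}\phi$ and $\Lambda^{-s}\nabla\phi$ (equivalently $\Lambda^{-s+1}\phi$), integrate over $\mathbb{R}^3$, integrate by parts using $\mathrm{div}\,u=0$, and sum. The linear parts reproduce exactly the dissipation terms $\|\nabla u\|_{\dot H^{-s}}^2$, $\|\nabla\phi\|_{\dot H^{-s}}^2$, $2\|\Lambda^2\phi\|_{\dot H^{-s}}^2$ and $\|\Lambda^3\phi\|_{\dot H^{-s}}^2$ appearing on the left-hand side of (\ref{5-1}), using the rewriting (\ref{1-10}) of the third equation so that the borrowed second-order term $-2\Delta\phi$ supplies the $\|\nabla\phi\|_{\dot H^{-s}}^2$ contribution; the pressure term drops by divergence-freeness. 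What remains is to bound the four classes of nonlinear terms: the Navier-Stokes convection $\int \Lambda^{-s}(u\cdot\nabla u)\cdot\Lambda^{-s}u$, the coupling term $\int\Lambda^{-s}(\Delta\phi\cdot\nabla\phi)\cdot\Lambda^{-s}u$, the transport term $\int\Lambda^{-s}(u\cdot\nabla\phi)\cdot\Lambda^{-s}\phi$ (and its $\nabla$-level analogue), and the cubic double-well term $\int\Lambda^{-s}\Delta[(\phi+\omega_0)^3-2(\phi+\omega_0)]\cdot\Lambda^{-s}\phi$ (and its $\nabla$-level analogue).

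For each nonlinear term I would first pull out the negative-order operator via the bound $\|\Lambda^{-s}(fg)\|_{L^2}\lesssim \|fg\|_{L^{p}}$ with $\tfrac1p=\tfrac12+\tfrac s3$ (Lemma \ref{lem2.3}), so that $\int\Lambda^{-s}G\cdot\Lambda^{-s}h \lesssim \|h\|_{\dot H^{-s}}\,\|G\|_{L^{p}}$; this is precisely why the restriction $s\in[0,\tfrac12]$ is imposed, since it keeps $p\in[\tfrac32,2]$ within the admissible range of both Lemma \ref{lem2.3} and the Sobolev embeddings. Then I would split each product by Hölder, $\|fg\|_{L^p}\le \|f\|_{L^{3/s}}\|g\|_{L^{6/(3-2s)}}$ when $s>0$ (and the plain $L^2\times L^2\hookrightarrow L^{\cdot}$ estimate when $s=0$), use Lemma \ref{x} to write $\|g\|_{L^{6/(3-2s)}}\lesssim\|g\|_{\dot H^{s}}$, and control the remaining $L^{3/s}$ factor by Gagliardo–Nirenberg interpolation (Lemma \ref{lem2.1}) between an $L^2$-norm of low order and an $L^2$-norm of high order of $u$ or $\phi$. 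The outcome in every case is a product of the form $\big(\text{low-order }\dot H^{1/2}\text{ norms, i.e. }\le\delta_0\big)\times\big(\text{an }E(t)\text{-type factor}\big)\times\|(u,\phi,\nabla\phi)\|_{\dot H^{-s}}$; the small low-order factors come from the already-established bound $\|(u,\phi,\nabla\phi)\|_{L^\infty_t\dot H^{1/2}}\lesssim\delta_0$ of Theorem \ref{thm1.1}, and collapsing them into constants produces exactly the right side $E(t)\,\|(u,\phi,\nabla\phi)\|_{\dot H^{-s}}$ of (\ref{5-1}), with $E(t)=\|\nabla u\|_{L^2}^2+\|\Lambda^2 u\|_{L^2}^2+\|\nabla^2\phi\|_{L^2}^2+\|\Lambda^3\phi\|_{L^2}^2$ (these $H^1$-through-$H^3$ quantities are time-integrable by (\ref{9-13}) with $k=1$).

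The cubic potential term is the one requiring care: I would expand $(\phi+\omega_0)^3-2(\phi+\omega_0)=(\phi+\omega_0)\big((\phi+\omega_0)^2-2\big)=(\phi+\omega_0)(\phi+\omega_0-\sqrt2)(\phi+\omega_0+\sqrt2)$, so that — since $\omega_0=\pm1$ makes one factor essentially $\phi$ up to an order-one shift — the Leibniz rule for $\Lambda^{-s}$ applied to this product always leaves a genuine $\phi$-factor (or a $\nabla\phi$ after the extra derivative in the $I_{12}$-analogue) carrying the negative regularity, exactly as was done in estimates (\ref{9-5c}) and (\ref{9-11c}); the two remaining factors are bounded in $L^6\lesssim\dot H^1$ and absorbed as $\|\nabla\phi\|_{L^2}^2\lesssim\delta_0^2$. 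I expect the main obstacle to be precisely bookkeeping this cubic term: one must verify that after distributing $\Lambda^{-s}$ there is always at least one factor on which $\Lambda^{-s}$ (rather than a positive-order derivative) acts, so that the $\dot H^{-s}$ norm — and not an uncontrolled positive-order norm — is what multiplies through, and simultaneously that the two spare factors can be taken in a space controlled by the $\dot H^{1/2}$ smallness rather than by $E(t)$. Once that accounting is done, summing the three energy identities and collecting terms yields (\ref{5-1}) directly.
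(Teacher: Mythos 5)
Your proposal follows essentially the same route as the paper: the same three $\dot H^{-s}$-level energy identities, the same use of Lemma \ref{lem2.3} to reduce $\|\Lambda^{-s}(fg)\|_{L^2}$ to $\|fg\|_{L^{p}}$ with $\tfrac1p=\tfrac12+\tfrac s3$, the same Gagliardo--Nirenberg interpolation of the $L^{3/s}$ factor, and the same factorization $(\phi+\omega_0)(\phi+\omega_0-\sqrt2)(\phi+\omega_0+\sqrt2)$ for the cubic term. One small slip: your Hölder split $\|fg\|_{L^p}\le\|f\|_{L^{3/s}}\|g\|_{L^{6/(3-2s)}}$ lands in $L^2$ rather than $L^p$ (since $\tfrac s3+\tfrac{3-2s}6=\tfrac12$); the correct split, used in the paper, is $\|f\|_{L^{3/s}}\|g\|_{L^2}$, after which your interpolation of the $L^{3/s}$ factor goes through unchanged.
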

\begin{proof}
Applying $\Lambda^{-s}$ to (\ref{1-1})$_1$, applying $\Lambda^{-s}\nabla$ to (\ref{1-1})$_3$,  multiplying the resulting identities by $\Lambda^{-s}u$ and $\Lambda^{-s}\nabla\phi$, and then integrating over $\mathbb{R}^3$ by parts, summing them up, we deduce that
\begin{equation}
\label{5-2}
\begin{aligned}&
\frac12\frac d{dt}\int_{\mathbb{R}^3}(|\Lambda^{-s}u|^2+|\Lambda^{-s}\nabla\phi|^2)dx+\|\Lambda^{-s}\nabla u\|_{L^2}^2+\|\Lambda^{-s+3} \phi\|_{L^2}^2+\|\Lambda^{-s+2}\phi\|_{L^2}^2
\\
=&-\int_{\mathbb{R}^3}\Lambda^{-s}(u\cdot\nabla u)\cdot\Lambda^{-s}udx-\int_{\mathbb{R}^3}\Lambda^{-s}[\Delta\phi\cdot\nabla\phi ]\cdot\Lambda^{-s}udx
\\
&-\int_{\mathbb{R}^3}\Lambda^{-s} (u\cdot\nabla \phi)\cdot\Lambda^{-s} \phi dx+\int_{\mathbb{R}^3}\Lambda^{-s}\Lambda^2((\phi+\omega_0)^3-2(\phi+\omega_0))\cdot\Lambda^{-s} \phi dx\\
&-\int_{\mathbb{R}^3}\Lambda^{-s}\nabla(u\cdot\nabla \phi)\cdot\Lambda^{-s}\nabla\phi dx+\int_{\mathbb{R}^3}\Lambda^{-s}\Lambda^3((\phi+\omega_0)^3-2(\phi+\omega_0))\cdot\Lambda^{-s}\nabla\phi dx
\\
=:&K_1+K_2+K_3+K_4+K_5+K_6.
\end{aligned}
\end{equation}
If $s\in[0,\frac12]$, then $\frac12+\frac s3<1$ and $\frac 3s\geq6$. By using the estimate (\ref{2-4}) of Riesz potential in Lemma \ref{lem2.3}, together with H\"{o}lder's inequality and Young's inequality, we have
\begin{equation}
\begin{aligned}\label{5-3}
K_1=&\int_{\mathbb{R}^3}\Lambda^{-s}(u\cdot\nabla u)\cdot\Lambda^{-s}udx\\
\leq&\|\Lambda^{-s}(u\cdot\nabla u)\|_{L^2}\|\Lambda^{-s}u\|_{L^2}
\\
\lesssim&\|u\cdot\nabla u\|_{L^{\frac1{\frac12+\frac s3}}}\|\Lambda^{-s}u\|_{L^2}
\\
\lesssim&\|u\|_{L^{\frac 3s}}\|\nabla u\|_{L^2}\|\Lambda^{-s}u\|_{L^2}
\\
\lesssim&\|\nabla u\|_{L^2}^{\frac 12+s}\|\Lambda^2u\|_{L^2}^{\frac12-s}\|\nabla u\|_{L^2}\|\Lambda^{-s}u\|_{L^2}
\\
\lesssim&\|\Lambda^{-s}u\|_{L^2}(\|\nabla u\|_{L^2}^2+\|\Lambda^2u\|_{L^2}^2)
,
\end{aligned}
\end{equation}
\begin{equation}
\begin{aligned}\label{5-4}
K_2=&\int_{\mathbb{R}^3} \Lambda^{-s}(\Delta\phi\cdot\nabla\phi) \cdot\Lambda^{-s}udx\\
\leq&\|\Lambda^{-s}(\Delta\phi\cdot\nabla\phi)\|_{L^2}\|\Lambda^{-s}u\|_{L^2}
\\
\lesssim&\|\Delta\phi\cdot\nabla\phi\|_{L^{\frac1{\frac12+\frac s3}}}\|\Lambda^{-s}u\|_{L^2}
\\
\lesssim&\|\nabla\phi\|_{L^{\frac 3s}}\|\Lambda^2\phi\|_{L^2}\|\Lambda^{-s}u\|_{L^2}
\\
\lesssim&\|\Lambda^2\phi\|_{L^2}^{\frac 12+s}\|\Lambda^2\nabla\phi\|_{L^2}^{\frac12-s}\|\Lambda^2\phi\|_{L^2}\|\Lambda^{-s}u\|_{L^2}
\\
\lesssim&\|\Lambda^{-s}u\|_{L^2}(\|\Lambda ^2\phi\|_{L^2}^2+\|\Lambda^3\phi\|_{L^2}^2)
,
\end{aligned}
\end{equation}
\begin{equation}
\label{5-12c}
\begin{aligned}
K_3=&-\int_{\mathbb{R}^3}\Lambda^{-s} (u\cdot\nabla \phi)\cdot\Lambda^{-s} \phi dx\\
\leq&\|\Lambda^{-s} (u\cdot\nabla \phi)\|_{L^2}\|\Lambda^{-s} \phi\|_{L^2}
\\
\lesssim&\| u\cdot\nabla \phi \|_{L^{\frac1{\frac12+\frac s3}}}\|\Lambda^{-s} \phi\|_{L^2}
 \\
\lesssim& \|\nabla\phi\|_{L^2}\|u\|_{L^{\frac 3s}}\|\Lambda^{-s} \phi\|_{L^2}
\\
\lesssim& \|\nabla\phi\|_{L^2}\|\nabla u\|_{L^2}^{\frac 12+s}\|\Delta\phi\|_{L^2}^{\frac12-s} \|\Lambda^{-s} \phi\|_{L^2}
\\
\lesssim&\|\Lambda^{-s} \phi\|_{L^2}(\|\nabla u\|_{L^2}^2+\|\nabla\phi\|_{L^2}^2+ \|\Lambda^2\phi\|_{L^2}^2),
\end{aligned}
\end{equation}
and
\begin{equation}
\label{5-12}
\begin{aligned}
K_5=&-\int_{\mathbb{R}^3}\Lambda^{-s}\nabla(u\cdot\nabla \phi)\cdot\Lambda^{-s}\nabla\phi dx\\
\leq&\|\Lambda^{-s}\nabla(u\cdot\nabla \phi)\|_{L^2}\|\Lambda^{-s}\nabla\phi\|_{L^2}
\\
\lesssim&\|\nabla(u\cdot\nabla \phi)\|_{L^{\frac1{\frac12+\frac s3}}}\|\Lambda^{-s}\nabla\phi\|_{L^2}
\\
\lesssim&(\|\nabla u\|_{L^2}\|\nabla\phi\|_{L^{\frac 3s}}+\|u\|_{L^{\frac3s}}\|\Lambda^2\phi\|_{L^2})\|\Lambda^{-s}\nabla\phi\|_{L^2}
\\
\lesssim&(\|\Lambda^2\phi\|_{L^2}^{\frac 12+s}\|\Lambda^3\phi\|_{L^2}^{\frac12-s}\|\nabla u\|_{L^2}+\|\nabla u\|_{L^2}^{\frac 12+s}\|\Lambda^2u\|_{L^2}^{\frac12-s}\|\Lambda^2\phi\|_{L^2})\|\Lambda^{-s}\nabla\phi\|_{L^2}
\\
\lesssim&\|\Lambda^{-s}\nabla\phi\|_{L^2}(\|\nabla u\|_{L^2}^2+\|\Lambda^2u\|_{L^2}^2+\|\Lambda^3 \phi\|_{L^2}^2+\|\Lambda^2\phi\|_{L^2}^2)
.
\end{aligned}
\end{equation}
Moreover, $K_4$ can be estimated as
\begin{equation}
\begin{aligned}
\label{5-13c}
K_4 
=&\int_{\mathbb{R}^3}\Lambda^{-s}\Delta[(\phi+\omega_0)(\phi+\omega_0+\sqrt{2})(\phi+\omega_0-\sqrt{2})]\cdot\Lambda^{-s} \phi dx
\\
\lesssim&\|\Lambda^{-s} \phi\|_{L^2}\|\Lambda^{-s}\Delta[(\phi+\omega_0)(\phi+\omega_0+\sqrt{2})(\phi+\omega_0-\sqrt{2})]\|_{L^2}
\\
\lesssim&\|\Lambda^{-s}\phi\|_{L^2}\| \Delta[(\phi+\omega_0)(\phi+\omega_0+\sqrt{2})(\phi+\omega_0-\sqrt{2})]\|_{L^{\frac1{\frac12+\frac s3}}}
\\
\lesssim& \|\Lambda^{-s} \phi\|_{L^2}\left(\|\Delta(\phi+\omega_0)\|_{L^2}\|\phi+\omega_0+\sqrt{2}\|_{L^{\infty}}\|\phi+\omega_0-\sqrt{2}\|_{L^{\frac3s}} \right.
\\&\left.+\|\Delta(\phi+\omega_0+\sqrt{2})\|_{L^2}\|\phi+\omega_0 \|_{L^{\infty}}\|\phi+\omega_0-\sqrt{2}\|_{L^{\frac3s}}\right.\\&\left.+\|\Delta(\phi+\omega_0-\sqrt{2})\|_{L^2}\|\phi+\omega_0+\sqrt{2}\|_{L^{\infty}}\|\phi+\omega_0 \|_{L^{\frac3s}}   \right)
\\
\lesssim& \|\Lambda^{-s}\phi\|_{L^2}\|\Delta\phi\|_{L^2}\|\nabla\phi\|^{\frac12}_{L^2}\|\Delta\phi\|_{L^2}^{\frac12}\|\nabla\phi\|^{\frac12+s}_{L^2}\|\Delta\phi\|_{L^2}^{\frac12-s}
\\
\lesssim&\|\Lambda^{-s} \phi\|_{L^2}( \|\nabla^2\phi\|_{L^2}^2+\|\nabla\phi\|_{L^2}^2)  ,  .
\end{aligned}
\end{equation}
 We estimate $K_6$ as
\begin{equation}
\begin{aligned}
\label{5-13}
K_6=&\int_{\mathbb{R}^3}\Lambda^{-s}\Lambda^3((\phi+\omega_0)^3-2(\phi+\omega_0))\cdot\Lambda^{-s}\nabla\phi dx
\\
=&\int_{\mathbb{R}^3}\Lambda^{-s}\Lambda^3[(\phi+\omega_0)(\phi+\omega_0+\sqrt{2})(\phi+\omega_0-\sqrt{2})]\cdot\Lambda^{-s}\nabla\phi dx
\\
\lesssim&\|\Lambda^{-s}\nabla\phi\|_{L^2}\|\Lambda^{-s}\Lambda^3[(\phi+\omega_0)(\phi+\omega_0+\sqrt{2})(\phi+\omega_0-\sqrt{2})]\|_{L^2}
\\
\lesssim&\|\Lambda^{-s}\nabla\phi\|_{L^2}\| \Lambda^3[(\phi+\omega_0)(\phi+\omega_0+\sqrt{2})(\phi+\omega_0-\sqrt{2})]\|_{L^{\frac1{\frac12+\frac s3}}}
\\
\lesssim& \|\Lambda^{-s}\nabla\phi\|_{L^2}\left(\|\nabla^3(\phi+\omega_0)\|_{L^2}\|\phi+\omega_0+\sqrt{2}\|_{L^{\infty}}\|\phi+\omega_0-\sqrt{2}\|_{L^{\frac3s}} \right.
\\&\left.+\|\nabla^3(\phi+\omega_0+\sqrt{2})\|_{L^2}\|\phi+\omega_0 \|_{L^{\infty}}\|\phi+\omega_0-\sqrt{2}\|_{L^{\frac3s}}\right.\\&\left.+\|\nabla^3(\phi+\omega_0-\sqrt{2})\|_{L^2}\|\phi+\omega_0+\sqrt{2}\|_{L^{\infty}}\|\phi+\omega_0 \|_{L^{\frac3s}}   \right)
\\
\lesssim& \|\Lambda^{-s}\nabla\phi\|_{L^2}\|\nabla^3\phi\|_{L^2}\|\nabla\phi\|^{\frac12}_{L^2}\|\Delta\phi\|_{L^2}^{\frac12}\|\nabla\phi\|^{\frac12+s}_{L^2}\|\Delta\phi\|_{L^2}^{\frac12-s}
\\
\lesssim&\|\Lambda^{-s}\nabla\phi\|_{L^2}( \|\nabla^2\phi\|_{L^2}^2+\|\nabla^3\phi\|_{L^2}^2+\|\nabla\phi\|_{L^2}^2)  ,
\end{aligned}
\end{equation}where we have used the fact that $\nabla\phi\in L^{\infty}(0,T;H^N)$.
Combining (\ref{5-2})-(\ref{5-13}) together, we obtain (\ref{5-1}) and   the proof is complete.

\end{proof}

In the following, we give the proof of Theorem \ref{thm1.2}.
\begin{proof}[Proof of Theorem \ref{thm1.2}]
 Define
$$
\mathcal{E}_{-s}(t):=\|\Lambda^{-s}u(t)\|_{L^2}^2+ \|\Lambda^{-s} \phi(t)\|_{L^2}^2+ \|\Lambda^{-s}\nabla\phi(t)\|_{L^2}^2.
$$
For inequality (\ref{5-1}), integrating in time, by the bound (\ref{9-13}), we have
\begin{equation}
\label{6-5c}
\begin{aligned}
\mathcal{E}_{-s}(t)\leq&\mathcal{E}_{-s}(0)+C\int_0^tE(t)\sqrt{\mathcal{E}_{-s}(\tau)}d\tau
\\
\leq&C_0\left(1+\sup_{0\leq\tau\leq t}\sqrt{\mathcal{E}_{-s}(\tau)}d\tau\right),
\end{aligned}
\end{equation}
which implies (\ref{1-4}) for $s\in[0,\frac12]$, that is
\begin{equation}
\label{6-5}
\|\Lambda^{-s}u(t)\|_{L^2}^2 +\|\Lambda^{-s} \phi(t)\|_{L^2}^2 +\|\Lambda^{-s}\nabla\phi(t)\|_{L^2}^2\leq C_0.
\end{equation}
Moreover, if  $l=1,2,\cdots,N-1$, we may use Lemma \ref{lem2.2} to have
$$
\|\Lambda^{l+1}f\|_{L^2}\geq C\|\Lambda^{-s}f\|_{L^2}^{-\frac1{l+s}}\|\Lambda^lf\|_{L^2}^{1+\frac1{l+s}}.
$$
Then, by this facts and (\ref{6-5}), we get
\begin{equation}\label{7-1}
\|\Lambda^{l+1}(u ,\phi,\nabla\phi)\|_{L^2}^2\geq C_0(\|\Lambda^{l}(u, \phi,\nabla\phi)\|_{L^2}^2)^{1+\frac1{k+s}}.
\end{equation}
Thus, for $k=0,1,2,\cdots,N-1$, we deduce from (\ref{9-13})  the following inequality
\begin{equation}
\label{7-2}
\frac d{dt}(\|\Lambda^{k}u\|_{L^2}^2+\|\Lambda^{k}\phi\|_{L^2}^2+\|\Lambda^{k+1}\phi\|_{L^2}^2)+C_0\left(\|\Lambda^{k}u\|_{L^2}^+\|\Lambda^{k}\phi\|_{L^2}^2+\|\Lambda^{k+1}\phi\|_{L^2}^2\right)^{1+\frac1{l+s}}\leq 0.
\end{equation}
Solving this inequality directly gives
\begin{equation}
\label{7-3}
\|\Lambda^{k}u\|_{L^2}^2+\|\Lambda^{k}\phi\|_{L^2}^2+\|\Lambda^{k+1}\phi\|_{L^2}^2\leq C_0(1+ t)^{-l-s},\quad\hbox{for}~l=1,2,\cdots,N-1,
\end{equation}
which means (\ref{1-5}) holds. Hence, we   complete the proof of Theorem \ref{thm1.2}.

\end{proof}

\section*{Acknowledgement} This paper was supported by the   Natural Science Foundation of China (grant No. 11401258) and China Postdoctoral Science Foundation (grant No. 2015M581689).   The author also would like to thank Prof. Andrea Giorgini and Prof. Yong Zhou for their careful reading and suggestions.

\section*{Data availability}The data that support the findings of this study are available from the corresponding author upon reasonable request.

}

\end{document}